\documentclass[11pt]{article}
\usepackage[letterpaper,left=1in,right=1in,top=1in,bottom=1in]{geometry}
\usepackage[mathscr]{eucal}
\usepackage{epsfig,epsf,psfrag}
\usepackage{amssymb,amsfonts,latexsym}
\usepackage{amsmath}
\usepackage{graphicx}
\usepackage{epstopdf}
\usepackage{bm,xcolor,url}
\usepackage{fixltx2e}
\usepackage{array}
\usepackage{verbatim}
\usepackage[noend]{algpseudocode}
\usepackage{cite}
\usepackage{algorithm}
\usepackage{verbatim}
\usepackage{textcomp}
\usepackage{mathrsfs}
\usepackage[referable]{threeparttablex}
\usepackage{subfig}
\usepackage{amsthm}
\usepackage{enumerate}
\usepackage{footnote}
\usepackage{enumitem}
\usepackage{xr}
\usepackage{mathtools}
\catcode`~=11 \def\UrlSpecials{\do\~{\kern -.15em\lower .7ex\hbox{~}\kern .04em}} \catcode`~=13 

\allowdisplaybreaks[3]

\newcommand{\tnorm}[1]{{\left\vert\kern-0.25ex\left\vert\kern-0.25ex\left\vert #1 
    \right\vert\kern-0.25ex\right\vert\kern-0.25ex\right\vert}}
\newcommand{\tnormt}[1]{{\vert\kern-0.25ex\vert\kern-0.25ex\vert #1 
    \vert\kern-0.25ex\vert\kern-0.25ex\vert}}

\newcommand{\R}{\bbR}

\newcommand{\norm}[1]{\left\Vert#1\right\Vert}
\newcommand{\normt}[1]{\Vert#1\Vert}

\newcommand{\nn}{\nonumber}

\newcommand{\nt}{\addtocounter{equation}{1}\tag{\theequation}} 

\newcommand{\dom}{\mathsf{dom}\,}

\newcommand{\ri}{\mathsf{ri}\,}
\newcommand{\where}{\mathrm{where}}

\newcommand{\andd}{\mathrm{and}}

\newcommand{\conv}{\mathsf{conv}\,}

\newcommand{\diam}{\mathsf{diam}}

\newcommand{\calY}{\mathcal{Y}}

\newcommand{\rmP}{\mathrm{P}}

\newcommand{\rms}{\mathrm{s}}

\newcommand{\bbR}{\mathbb{R}}

\DeclareMathAlphabet{\mathbsf}{OT1}{cmss}{bx}{n}

\newcommand{\haty}{\hat{y}}

\newcommand{\bart}{\bar{t}}

\newcommand{\barx}{\bar{x}}
\newcommand{\bary}{\bar{y}}

\newcommand{\ceil}[1]{\lceil{#1}\rceil}
\newcommand{\floor}[1]{\lfloor{#1}\rfloor}
\newcommand{\ip}[2]{\left\langle{#1},{#2}\right\rangle}
\newcommand{\ipt}[2]{\langle{#1},{#2}\rangle}

\newcommand{\eqa}{\stackrel{\rm(a)}{=}}

\DeclareMathOperator*{\argmin}{arg\,min}

\DeclareMathOperator{\st}{s.t.}

\newtheorem{theorem}{Theorem} 
\newtheorem*{theorem*}{Theorem}
\newtheorem{lemma}{Lemma}

\newtheorem{prop}{Proposition}
\newtheorem{corollary}{Corollary}
 
\newtheorem*{assump*}{Assumption}

\theoremstyle{definition}

\theoremstyle{remark}
\newtheorem{remark}{Remark}

\newcommand{\qednew}{\nobreak \ifvmode \relax \else
      \ifdim\lastskip<1.5em \hskip-\lastskip
      \hskip1.5em plus0em minus0.5em \fi \nobreak
      \vrule height0.75em width0.5em depth0.25em\fi}

\title{On the Optimal Linear Minimization Oracle Complexity of Active-Hull Methods for Minimizing Convex H\"older Smooth Functions}
\usepackage[colorlinks=true,breaklinks=true,bookmarks=true,urlcolor=blue,
     citecolor=blue,linkcolor=blue,bookmarksopen=false,draft=false]{hyperref}
\author{Renbo Zhao
\thanks{Tippie College of Business, University of Iowa, Iowa City, IA 52242 ({mailto:  renbo-zhao@uiowa.edu}).}
}
\usepackage[normalem]{ulem}

\usepackage{algorithm}

\numberwithin{equation}{section}
\numberwithin{definition}{section}
\numberwithin{prop}{section}
\numberwithin{lemma}{section}
\numberwithin{theorem}{section}
\numberwithin{remark}{section}
\numberwithin{corollary}{section}

\newcommand{\Df}{ D_f}
\newcommand{\Dg}{ D_g}

\begin{document}

\maketitle

\begin{abstract}
We consider the convex problem  $(P): {\min}_{y\in \bbR^n}\, f(Ay)+g(y)$, where $A:\bbR^n\to \bbR^m$ is a linear operator, $f$ is convex and $(M,\nu)$-H\"older smooth on $\bbR^m$, and $g$ is proper, closed convex on $\bbR^n$,  and admits a (generalized) linear minimization oracle (LMO). An active-hull  (AH) method calls the LMO in each iteration and outputs a point that lies in the convex hull of all the past LMO returns. We show that under the Euclidean geometry (and in the high-dimensional regime), the optimal LMO complexity for the class of AH methods to solve $(P)$ is $O\big(M^{{2}/({1+\nu})}D^2\varepsilon^{-{2}/({1+\nu})}\big)$, where $D$  denotes the diameter of the feasible region and $\varepsilon$ denotes the objective sub-optimality gap. In particular, the optimal LMO complexity is achieved by a single-loop first-order primal-dual splitting method. Since this method requires knowledge of the problem parameters  (e.g., $M$, $D$ and $\nu$), we propose two meta parameter-search algorithms that aim to resolve this issue.  These algorithms are parameter-free, but come with the price of an additional log-factor in their LMO complexities.  
\end{abstract}

\section{Introduction} \label{sec:intro}

Consider the following convex composite optimization problem: 
\begin{equation}
p^*:={\min}_{y\in \bbR^n}\; \{p(y):=f(Ay)+g(y)\}, \tag{P} \label{eq:P}
\end{equation}
where $A:\bbR^n\to \bbR^m$ is a linear operator and $f:\bbR^m\to \bbR$ is convex and $(M,\nu)$-H\"older smooth on $\bbR^m$ for some $M>0$ and $\nu\in[0,1]$, i.e., 
\begin{equation}
0\le f(x)- f(z) - \ip{s_z}{x-z} \le \frac{M}{1+\nu}\norm{x-z}^{1+\nu}, \quad \forall\,s_z\in \partial f(z),\quad \forall\, z,x\in \bbR^m. \label{eq:holder}
\end{equation}
Throughout this paper, $\normt{\cdot}$ 
denotes  the {\em Euclidean} norm. In particular, we call $f$ {\em nonsmooth} (and Lipschitz) if $\nu=0$ and {\em smooth} if $\nu=1$. 
Also, $g:\bbR^n\to \bbR\cup\{+\infty\}$ is a proper, closed and convex (p.c.c.) function.  Additionally, we assume that 
\begin{enumerate}[label=(A\arabic*)]
\item \label{assump:LMO} For any $\lambda\in \bbR^n$, the following optimization problem 
\begin{equation}
 {\min}_{x\in \bbR^n}\,\{\ip{\lambda}{y}+g(y)\}, 
  \label{eq:LMO}
\end{equation}
admits an easily computable optimal solution. We shall call the oracle that outputs an solution of~\eqref{eq:LMO} the   {\em (generalized) linear minimization oracle} (LMO) of $g$.
\item The set $A(\dom g)$ is bounded, where $\dom g:= \{y\in\bbR^n:g(y)<+\infty\}$ denotes the domain of $g$. Consequently, define
\begin{equation}
D:= {\sup}_{x,\,z\in A(\dom g)}\;\normt{x-z}<+\infty.  \label{eq:def_D}
\end{equation}
\end{enumerate}
In this work, we 
consider the class of LMO-based active-hull (LMO-AH) methods for solving~\eqref{eq:P}, which is shown in Algorithm~\ref{algo:LMO-AH}. This class of methods was proposed in the seminal work of Lan~\cite{Lan_13}  for solving the following subclass of the problems in~\eqref{eq:P}:
\begin{equation}
{\min}_{y\in \calY}\; f(y), \tag{$\rm P_0$} 
\label{eq:P0} 
\end{equation}
where $\calY\ne\emptyset$ is a convex compact set and $f$ is either nonsmooth or smooth (which corresponds to either $\nu=0$ or $\nu=1$).  
As remarked in~\cite{Lan_13}, this class of methods is fairly broad. 
Among them, an important subclass is the 
 {\em first-order} LMO-AH methods, wherein the dual variable $\lambda_i$ is constructed by using only the first-order oracle (FO) of $f$ and LMO of $g$. In fact, 
 these methods  include the Frank-Wolfe (FW) method~\cite{Frank_56} and many of its variants (see~\cite{Lan_13,Jaggi_13,Lacoste_15,Lan_16,Zhao_22} and references therein) as special cases.  That said, the entire class of LMO-AH methods need not be first-order, and $\lambda_i$ can be constructed via various other oracles of $f$, such as its LMO, proximal oracle or higher-order oracle. 
Indeed, there are only two defining features of the LMO-AH methods: i) at each iteration, the iterate $y_{i+1}$ is returned by the LMO, and ii) the final output $\bary_k$ lies in the convex hull of all the past iterates $\{y_0,\ldots,y_k\}$, which gives rise to the qualifier ``active-hull''. 

For a feasible point $y\in \dom g$, we call it an $\varepsilon$-optimal solution of~\eqref{eq:P} if  $p(y) - p^*<\varepsilon$. 
The main focus of this work is to investigate the optimal LMO complexity of the class of LMO-AH methods for finding an $\varepsilon$-optimal solution of~\eqref{eq:P}. 
Let us start with lower bounds of this optimal complexity. 
For the cases where $\nu=0$ or $\nu=1$,  Lan~\cite{Lan_13} established the following lower LMO-complexity bounds for LMO-AH methods to find  an $\varepsilon$-optimal solution of~\eqref{eq:P0}: 
\begin{align}
\Omega\left(\min\left\{n,M^{{2}}D^2\varepsilon^{-{2}}\right\}\right)\quad \mbox{for }\nu=0, \quad \andd \quad  
\Omega\left(\min\left\{n, MD^2\varepsilon^{-1}\right\}\right)\quad \mbox{for }\nu=1.  \label{eq:lb_Lan}
\end{align}
Since the problems in~\eqref{eq:P0} form a subclass of the problems in~\eqref{eq:P}, the lower bounds in~\eqref{eq:lb_Lan} also apply to~\eqref{eq:P}. However, to our knowledge, when $\nu\in(0,1)$, no lower bound has been established. As a first contribution in this paper, we show that across all values of $\nu\in[0,1]$, the LMO complexity for LMO-AH methods to find  an $\varepsilon$-optimal solution of~\eqref{eq:P} should be at least
\begin{align}
\Omega\left(\min\left\{n,M^{\frac{2}{1+\nu}}D^2\varepsilon^{-\frac{2}{1+\nu}}\right\}\right). \label{eq:lb_holder}
 \end{align}
Indeed, this lower bound can be regarded as an ``interpolation'' between the lower bounds at $\nu=0$ and $\nu=1$, which are shown in~\eqref{eq:lb_Lan}.

Of course, at this point, one may naturally wonder if any of the lower complexity bounds above are optimal, i.e., if any of them can be achieved by some LMO-AH method. In fact, when $\nu=1$, it is well-known that the (generalized) Frank-Wolfe methods (see e.g.,~\cite{Lan_13,Jaggi_13,Bach_15,Freund_16,Nest_18,Ghad_19,Pena_23}) can achieve the lower complexity bound in~\eqref{eq:lb_Lan}. The situation becomes very different in the case of $\nu=0$, since it is well-known from~\cite[Example 1]{Nest_18} that the 
FW method may not convergence to the optimal solution of~\eqref{eq:P0} with the subgradient of $f$. Due to this, many works~\cite{Cox_14,He_15a,Pier_14,Lan_16} additionally assume that $f$ has a ``proximal-friendly'' dual representation. Since the domain of this dual representation is bounded (as $f$ is Lipschitz on $\bbR^m$), the problem in~\eqref{eq:P} can be effectively cast as a convex-concave saddle-point problems (SPPs) with bilinear coupling and bounded primal and dual domains. As such, these works propose to modify the existing projection-based methods for solving SPPs (such as Nesterov's smoothing~\cite{Nest_05}, dual Mirror descent~\cite{Nemi_79} and Mirror-Prox~\cite{Nest_04}) to accommodate the LMO 
in~\eqref{eq:LMO}. It turns out that all of the methods in~\cite{Cox_14,He_15a,Pier_14,Lan_16}  are LMO-AH methods, and they all achieve the lower complexity bound in~\eqref{eq:lb_Lan}. With above said, recently there were two notable first-order LMO-AH methods~\cite{Theku_20,Asgari_22} that achieve the the lower complexity bound in~\eqref{eq:lb_Lan} for solving~\eqref{eq:P0}  without assuming a ``proximal-friendly'' dual representation of $f$. The first method~\cite{Theku_20} 
leverage Moreau smoothing~\cite{Moreau_65} and the inexact accelerated proximal gradient method~\cite{Dev_14}, wherein the FW method was used 
to solve the (Euclidean) projection sub-problem onto $\calY$. In contrast to the primal and multi-loop nature of the first method, the second method in~\cite{Asgari_22} is primal-dual, single-loop and quite simple (where each iteration only consists of three updates). Indeed, the authors 
 explicitly introduce a primal slack variable $x$ and solve the following equivalent problem instead:
\begin{align}
{\min}_{x,y}\; f(x)\qquad \st\quad  x=y,\quad y\in\calY. 
\end{align}
It is worth mentioning that 
the simplicity and single-loop nature of this method are particularly appealing, both in theory and practice. 

 
 Up to this point, it is clear that both lower LMO complexity bounds in~\eqref{eq:lb_Lan}, which correspond to the cases $\nu=0$ and $\nu=1$, respectively, turn out to be optimal. However, in the case of $\nu\in(0,1)$, the situation becomes much unclearer. In fact, in this case, the literature on the LMO-AH methods 
 is much scarcer. 
 To start with, it is widely known that (see e.g.,~\cite{Dem_70,Dunn_78,Nest_18,Ghad_19,Pena_23}) the LMO complexity of the (generalized) FW method for solving~\eqref{eq:P} is
\begin{equation}
O(M^{1/\nu}D^{1+1/\nu}\varepsilon^{-1/\nu}), \qquad \mbox{for all}\;\; \nu\in(0,1], \label{eq:comp_FW_nu}
\end{equation}
which clearly does not match the lower bound in~\eqref{eq:lb_holder}. 
This complexity has been improved in the recent seminal work by Ouyang and Squires~\cite{Ouyang_23}, where the authors proposed a  universal conditional gradient sliding (UCGS) method, and showed that the LMO complexity of this method is
\begin{equation}
O\Big( M^{\frac{4}{1+3\nu}}D^{\frac{4(1+\nu)}{1+3\nu}}\varepsilon^{-\frac{4}{1+3\nu}}\Big), \qquad \mbox{for all}\;\; \nu\in[0,1]. \label{eq:comp_UCGS_nu}
\end{equation}
As observed by the authors, this complexity does not match the optimal one when $\nu=0$, and in their concluding remarks, they pose the following question:
\begin{center}
Is there a method that can improve the LMO complexity developed for the case where \(\nu \in (0,1]\),
 and can a universal method covering all cases \(\nu \in [0,1]\) be developed? 
\end{center}
In the light of our discussions above, we can ask a more ambitious question: 
\begin{center}\em 
Is there a  method that can achieve the lower LMO complexity bound in~\eqref{eq:lb_holder} for  all \(\nu \in [0,1]\)?
\end{center}

In this paper, we shall provide an affirmative answer to this question. We propose a single-loop first-order primal-dual splitting method (in Algorithm~\ref{algo:CG}), which is an instance of Algorithm~\ref{algo:LMO-AH}, that has an LMO complexity matching the lower bound in~\eqref{eq:lb_holder} for  all \(\nu \in [0,1]\) (and for the dimension $n$ sufficiently large). 
As a result, we simultaneously establish the optimality of the lower LMO-complexity bound in~\eqref{eq:lb_holder} and identify an optimal LMO-AH method that achieves this complexity. Since this method requires knowledge of the problem parameters (including $M$, $D$ and $\nu$), we propose two meta parameter-search algorithms that aim to resolve this issue.  These algorithms are parameter-free, but come with the price of an additional log-factor in their LMO complexities. 
%
%


\begin{algorithm}[t]
\caption{LMO-Based Active-Hull Methods}
\label{algo:LMO-AH}
\begin{algorithmic}
    \State {\bf Input}: $y_0 \in \dom g$, total number of iterations $K\ge 1$ and problem parameters (e.g., $M$, $\nu$, $D$)
    \For{\(k=0,1,2,\ldots,K-1\)}
        \State Construct dual variable $\lambda_k\in \bbR^n$ 
        \State Compute $y_{k+1} \in \argmin_{y\in\bbR^n} \; \ip{\lambda_k}{y} + g(y)$
   \EndFor
        \State {\bf Output}: $\bary_K \in \conv\{y_0,\ldots,y_K\}$
\end{algorithmic}
\end{algorithm}

\section{A Lower bound}

Let us present a lower LMO-complexity bound of any LMO-AH method. Notation-wise, let $e_i$ denote the $i$-th standard coordinate vector for $i\in[n]$ and $e:= \sum_{i=1}^n e_i$. 

\begin{theorem}
\label{thm:simplex-lower-bound}
Let $0\le \nu\le1$, $M>0$, $D>0$ and $n\ge 4$. 
 There exists an instance of~\eqref{eq:P0} 
such that for any LMO-AH method 
with at most $K$ 
LMO calls, its output $\bary_K$ satisfies that 
\begin{equation}
 f(\bary_K) -p^*  \ge C_\nu{MD^{1+\nu}}{(K+1)^{-\frac{1+\nu}{2}}},
 \qquad \forall\, 1\le K\le \floor{n/2}-1, 
 \label{eq:simplex-gap-lower-bound}
\end{equation}
where $C_\nu>0$ is some constant that only depends on $\nu$.  In other words, for any 
LMO-AH method to output an $\varepsilon$-optimal solution of~\eqref{eq:P0}, the number of 
LMO calls 
has to be at least
\begin{align}
\left\lceil\min\left\{\floor{{n}/{2}},\;C_\nu^{\frac{2}{1+\nu}} M^{\frac{2}{1+\nu}}D^2\varepsilon^{-\frac{2}{1+\nu}} \right\}\right\rceil -1 . 
\label{eq:complexity}
\end{align}
\end{theorem}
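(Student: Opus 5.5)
Following Lan's construction for $\nu=1$, we take the feasible set to be a scaled standard simplex
\[
\calY := \Big\{ y\in\bbR^n: y\ge 0,\ \textstyle\sum_i y_i = D/\sqrt2 \Big\},
\]
whose Euclidean diameter is exactly $D$. The objective will be a Hölder-smooth modification of the squared distance to the barycenter. Set $c := (D/\sqrt2)\,e/n$ and define
\[
f(y) := \frac{M}{1+\nu}\,\normt{y-c}^{1+\nu}.
\]
This $f$ is convex and $(L_\nu M,\nu)$-Hölder smooth, because $\nabla f(y)=M\normt{y-c}^{\nu-1}(y-c)$ and the map $u\mapsto \normt{u}^{\nu-1}u$ is $\nu$-Hölder with a constant $L_\nu\le 2^{1-\nu}$. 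Rescaling $M$ by $1/L_\nu$ therefore gives an $(M,\nu)$-Hölder function. Since $c\in\calY$, we have $p^*=0$.

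The key step is adversarial control of the LMO returns. The LMO over $\calY$ of a linear function $\ip{\lambda_k}{y}$ returns the vertex $(D/\sqrt2)e_j$ with $j\in\argmin_i(\lambda_k)_i$. When there are ties, the resisting oracle may return any minimizer. More robustly, the method's choice of $\lambda_k$ can be arbitrary, and every returned point is still a vertex (we choose $y_0$ to be a vertex as well, or we absorb $y_0$ by noting that it lies in the span of at most $n$ coordinates, and handle it as Lan does). Hence after $K$ LMO calls, $\bary_K$ lies in the convex hull of at most $K+1$ vertices, i.e.\ it is supported on an index set $S$ with $|S|\le K+1\le \floor{n/2}$. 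The lower bound then reduces to a purely geometric statement:
\[
\min\Big\{\normt{y-c}: y\in\calY,\ \mathrm{supp}(y)\subseteq S\Big\}.
\]
By symmetry and convexity the minimizer is uniform on $S$, and the minimum equals
\[
\frac{D}{\sqrt2}\sqrt{\frac1{|S|}-\frac1n}\ \ge\ \frac{D}{2}\,\frac{1}{\sqrt{|S|}},
\]
where the inequality uses $|S|\le n/2$. Substituting gives
\[
f(\bary_K)\ge \frac{M}{(1+\nu)L_\nu}\Big(\frac D2\Big)^{1+\nu}(K+1)^{-\frac{1+\nu}{2}},
\]
which is \eqref{eq:simplex-gap-lower-bound} with
\[
C_\nu=\frac{2^{-(1+\nu)}}{(1+\nu)L_\nu}.
\]

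The complexity statement \eqref{eq:complexity} follows directly. If $K+1<\min\{\floor{n/2},\,(C_\nu M D^{1+\nu}/\varepsilon)^{2/(1+\nu)}\}$, then the bound gives $f(\bary_K)-p^*>\varepsilon$. Inverting the inequality yields the stated count, after noting that $D^{(1+\nu)\cdot 2/(1+\nu)}=D^2$.

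I expect the main obstacle to be verifying the Hölder constant of $u\mapsto\normt{u}^{\nu-1}u$ uniformly in $\nu$. This is a standard inequality: one checks $\big\|\normt{a}^{\nu-1}a-\normt{b}^{\nu-1}b\big\|\le 2^{1-\nu}\normt{a-b}^\nu$, and it yields the upper bound in \eqref{eq:holder} after integration. Convexity of $f$ gives the lower bound. A secondary point of care is the treatment of $y_0$ and of ties in the LMO: every LMO output can be taken to be a vertex, so the support grows by at most one per call, and this is exactly what restricts the range to $K\le\floor{n/2}-1$.
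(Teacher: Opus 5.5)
Your proposal is correct and is essentially the paper's proof. The shared ingredients are the simplex $r\Delta_n$ with $r=D/\sqrt2$, the $2^{1-\nu}$-rescaled power $\frac{M}{2^{1-\nu}(1+\nu)}\normt{\cdot}^{1+\nu}$, a vertex-returning LMO with $y_0$ a vertex, the support bound $\normt{\bary_K}_0\le K+1$, and $n\ge 2(K+1)$ to absorb the $1/n$ term. The only difference is that you center $f$ at the barycenter, so $p^*=0$ and $\normt{y-c}^2=\normt{y}^2-r^2/n$; this subtracts the $1/n$ term inside the power rather than after it, and with $L_\nu=2^{1-\nu}$ gives the slightly larger constant $C_\nu=\frac{1}{4(1+\nu)}$ instead of $\frac{2^{(1+\nu)/2}-1}{4(1+\nu)}$.
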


\begin{proof}
In~\eqref{eq:P0}, let  $\calY:= r\Delta_n$, where $\Delta_n:= \{y\ge 0: e^\top y =1\}$ and $r>0$, and 
\begin{equation}
 f(y):=\frac{M}{2^{1-\nu}(1+\nu)}\norm{y}^{1+\nu}, \quad \forall\, y\in\R^n. 
 \label{eq:def_f}
\end{equation}
From~\cite[Theorem~6.3]{Rodom_20}, we know that $f$ is $(M,\nu)$-H\"older smooth on $\bbR^n$. Also, we know that 
\begin{align}
&
\diam (\calY) = ({\sup}_{y,y'\in r\Delta_n}\;\normt{y-y'}^2)^{1/2}\le \sqrt{2} r=D \quad\mbox{and}\quad \diam (\calY) \ge \normt{re_i - re_j} = \sqrt{2} r = D,\nn
\end{align}
and hence $\diam (\calY) =  \sqrt{2} r=D$. 
Furthermore, it is easy to verify that the (unique) optimal solution of this problem is given by 
\begin{equation}
 y^*=\frac{r}{n}e\quad \Longrightarrow\quad p^*=f(y^*)
 =\frac{M}{2^{1-\nu}(1+\nu)}
   r^{1+\nu}n^{-\frac{1+\nu}{2}}.
 \label{eq:opt_value}
\end{equation}
%

Now, let $y_0 = r e_1$. In addition, let the output of the LMO be $\{e_1,\ldots,e_n\}$, so $y_k = e_i$ for some $i\in[n]$, for all $k\ge 0$. Fix some $1\le k\le \floor{n/2}-1$. After $k$ LMO calls, since $\bary_k\in \conv\{y_0,\ldots,y_k\}$,   we know that $\normt{\bary_k}_0\le k+1$. Let $\bary_k^+$ be the sub-vector of $\bary_k$ with positive entries, and we have 
\begin{equation}
\normt{\bary_k} = \normt{\bary_k^+}\ge \frac{\normt{\bary_k^+}_1}{\sqrt{k+1}} = \frac{\normt{\bary_k}_1}{\sqrt{k+1}} = \frac{r}{\sqrt{k+1}}. \label{eq:lb_normy}
\end{equation}
Therefore, by~\eqref{eq:def_f},~\eqref{eq:opt_value} and~\eqref{eq:lb_normy}, we have 
\begin{align}
 f(\bary_k)-p^*  &\ge\frac{M}{2^{1-\nu}(1+\nu)}r^{1+\nu}  \left(  (k+1)^{-\frac{1+\nu}{2}}-n^{-\frac{1+\nu}{2}}
 \right).
\end{align}
Since $k\le \floor{n/2}-1$, we have  $n\ge 2(k+1)$, and 
\begin{align}
 f(\bary_k)-p^*  &\ge\frac{M}{2^{1-\nu}(1+\nu)}r^{1+\nu}  (1 - 2^{-\frac{1+\nu}{2}}) (k+1)^{-\frac{1+\nu}{2}}\\
 &= C_\nu  MD^{1+\nu} (k+1)^{-\frac{1+\nu}{2}}, \qquad \where \quad 
 C_\nu:=\frac{2^{\frac{1+\nu}{2}} - 1}{4 (1+\nu)}. 
\end{align}
As a result, if $k\le \min\{\floor{n/2},C_\nu^{\frac{2}{1+\nu}}M^{\frac{2}{1+\nu}}D^2\varepsilon^{-\frac{2}{1+\nu}}\}-1$, then 
\begin{align}
 f(\bary_k)-p^*  \ge  C_\nu  MD^{1+\nu}\max\{(n/2)^{-\frac{1+\nu}{2}},C_\nu^{-1}M^{-1}D^{-(1+\nu)}  \varepsilon \}\ge \varepsilon.
\end{align}
Therefore, if $f(\bary_k)-p^*<\varepsilon$, then we must have $k> \min\{\floor{n/2},C_\nu^{\frac{2}{1+\nu}}M^{\frac{2}{1+\nu}}D^2\varepsilon^{-\frac{2}{1+\nu}}\}-1$. 
\end{proof}

\section{A First-Order Primal-Dual Method and Its Rate Analysis} 

\begin{algorithm}[t]
\caption{A First-Order Primal-Dual Splitting Method}\label{algo:CG}
\begin{algorithmic}[1]
	\State {\bf Input:}  $y_{0}\in \dom g$, total number of iterations $K\ge 1$, parameters $\{\eta_k\}_{k\ge 0}$ and $\{\rho_k\}_{k\ge 0}$  
	\State  {\bf Initialize}: $x_0 = Ay_0$ and $\lambda_0\in \partial f(x_0)$ \label{line:controlled-init}
	\For{$k=0,1,2,\dots,K-1$}\vspace{-1ex}
		\begin{align}
 & y_{k+1} \in\arg\min_{y\in \R^n}g(y)+\ip{\lambda_k}{Ay}
 \label{eq:y-update}\\
 &\mbox{Compute } s_k\in \partial f(x_k)\\
 & x_{k+1}  =\arg\min_{x\in \R^m}   \ip{s_k-\lambda_k}{x}    +\frac{\eta_k}{2}\norm{x-x_k}^2    +\frac{\rho_k}{2}\norm{Ay_{k+1}-x}^2 \quad 
 \label{eq:x-update}\\
 &\lambda_{k+1}  =\lambda_k+\rho_k(Ay_{k+1}-x_{k+1})\\[-4ex]\nn
 \label{eq:dual-update}
\end{align}
 \State {\bf Output}: $\bary_K \in \conv\{y_1,\ldots,y_K\}$
	\EndFor
\end{algorithmic}
\end{algorithm}

\subsection{The Method} \label{sec:method}

To introduce this method, let us write down the Fenchel dual of~\eqref{eq:P}: 
\begin{equation}\label{eq:D}
-d^*:= -{\min}_{\lambda\in \bbR^n}\, \{d(u):=g^*(-A^*\lambda)+ f^*(\lambda)\},  \tag{D} 
\end{equation}
where $A^*:\bbR^m\to\R^n$ denotes the adjoint of $A$. 
Note that Assumption~\ref{assump:LMO} 
amounts to assuming that $\partial g^*(-\lambda)\ne \emptyset$ for all $\lambda\in\bbR^n$, which in turn amounts to assuming that $\dom g^* = \bbR^n$. 
This, together with the H\"older smoothness of $f$ on $\bbR^m$, indicates that both $\ri \dom f \cap\ri\dom \Psi\ne \emptyset$ and $(-\ri \dom f^*) \cap\ri\dom \Psi^*\ne \emptyset$. Hence by the Fenchel duality theorem (cf.~\cite[Theorem~31.1]{Rock_70}), we know that $ p^* = -d^* \in\bbR$, and both~\eqref{eq:P} and~\eqref{eq:D} have at least one optimal solution, which we denote by $y^*\in \dom g$ and $\lambda^*\in \dom f^*$, respectively. Consequently,  
we know that (cf.~\cite[Theorem~3.50]{Peyp_15})
\begin{equation}
 \lambda^*\in \partial f(A y^*), \quad -A^*\lambda^* \in \partial g(y^*).  \label{eq:opt_Fenchel_dual}
\end{equation}

Now, let us write~\eqref{eq:P} equivalently as 
\begin{align}
{\min}_{x,y}\; f(x) + g(y)\qquad \st\quad  Ay=x. \tag{$\rmP_\rms$} \label{eq:Ps} 
\end{align}
From our discussions above, it is clear that~\eqref{eq:Ps} has an optimal solution $(x^*,y^*)$, where $x^* = Ay^*$. 
Instead of solving~\eqref{eq:P} directly,  we aim to solve~\eqref{eq:Ps}, but of course, our eventual goal is still providing convergence rate guarantees to~\eqref{eq:P}.  
The motivation behind this is simply to {\em separate} the 
FO of $f$ and the LMO of $g$. In the case of $\nu=0$, this separation seems important for a LMO-AH method to obtain the desired LMO complexity in~\eqref{eq:lb_Lan}, as evidenced by the success of the two recent methods in~\cite{Theku_20} and~\cite{Asgari_22}, respectively (cf.~Section~\ref{sec:intro}). Furthermore, the counterexample in~\cite[Example 1]{Nest_18} suggests that directly 
using the subgradient of $f$ in the LMO~\eqref{eq:LMO} may result in non-convergence of a particular method to the optimal solution of~\eqref{eq:P} (as the  subgradient is not a descent direction). Since our goal is to achieve the lower bound in~\eqref{eq:lb_holder} uniformly for all $\nu\in[0,1]$, our method also adopts such a ``separation principle''. 
 
Our method is shown in Algorithm~\ref{algo:CG}. As we can see, the LMO of $g$  and the FO of $f$ appear in the $y$-update~\eqref{eq:y-update} and $x$-update~\eqref{eq:x-update}, respectively. In the case of $\nu=0$, our method coincides with the method in~\cite[Algorithm~1]{Asgari_22} when applied to~\eqref{eq:P0}. While certain optimistic primal–dual interpretation was provided in~\cite[Section~2.2]{Asgari_22},  
we provide another interpretation of Algorithm~\ref{algo:CG} through the framework of (linearized) alternating direction method of multipliers (ADMM) (see~\cite{Boyd_11,Ouyang_13,Suzuki_13,Ouyang_15} and references therein). 
Let us define the augmented Lagrangian of~\eqref{eq:Ps} with $f$ replaced by its quadratic approximation at some $\barx\in \bbR^m$: 
\begin{align}
L_{\eta,\rho}(x,y,\lambda;\barx):= f(\barx) + \ip{g_x}{x-\barx} + \frac{\eta}{2}\normt{x-\barx}^2 + g(y) + \ipt{\lambda}{Ay-x} + \frac{\rho}{2}\normt{Ay-x}^2, \label{eq:aug_lag}
\end{align}
where $\eta,\rho>0$ and $g_x\in \partial f(x)$. Consequently, the $x$-update step in~\eqref{eq:x-update} can be written as
\begin{equation}
x_{k+1}  ={\arg\min}_{x\in \R^m}\;\; L_{\eta_k,\rho_k}(x,y_{k+1},\lambda_k;x_k).
\end{equation}
In addition, the $\lambda$-update step~\eqref{eq:dual-update} is 
standard as in the classical augmented Lagragian method~\cite{Rock_76} and ADMM. 
That said, a distinguishing, and quite atypical, feature of Algorithm~\ref{algo:CG} lies in the $y$-update step~\eqref{eq:y-update}, wherein the augmented quadratic term $({\rho}/{2})\normt{Ay-x}^2$ in~\eqref{eq:aug_lag} is missing in the minimization problem. Of course, the motivation here is to utilize the LMO of $g$, but 
this step also makes Algorithm~\ref{algo:CG} essentially fall out of the class of ADMM and many of its variants. As we shall see later, substantially new techniques must be invented to analyze Algorithm~\ref{algo:CG}. Lastly, the output $\bary_K$ is chosen to be a properly weighted average of $\{y_1,\ldots,y_K\}$, and the weights will be given in our convergence rate analysis (cf.~Proposition~\ref{prop:param_choice}). 

Before presenting our analysis of Algorithm~\ref{algo:CG}, we remark that although Algorithm~\ref{algo:CG} shares a similar structure with~\cite[Algorithm~1]{Asgari_22}, 
to achieve the lower complexity in~\eqref{eq:lb_holder} uniformly over $\nu\in[0,1]$, our choices of the algorithmic parameters (i.e., $\{\eta_k\}_{k\ge 0}$ and $\{\rho_k\}_{k\ge 0}$) and the output~$\bary_K$, and more importantly, the analysis techniques, are vastly different from those in~\cite{Asgari_22}. 
More details are provided in Remark~\ref{rmk:AN}. 


\subsection{Convergence Rate Analysis of Algorithm~\ref{algo:CG}}  
\label{sec:analysis}



To begin our analysis, let us note the following fact, which is a restatement of~\cite[Lemma~1]{Nest_15}.

\begin{lemma}\label{lem:inexact}
If $f$ satisfies~\eqref{eq:holder}, then for any $\nu\in[0,1]$, we have  
\begin{equation}
f(y)\le f(x) + \ip{g_x}{y-x} + \frac{L}{2}\norm{y-x}^{2} + \delta, \quad \forall\,g_x\in \partial f(x),\quad \forall\, x,y\in \bbR^m,
\label{eq:inexact_quadratic}
\end{equation}
for some $L>0$ and $\delta:=\delta_\nu(L)\ge 0$. Indeed, if $\nu\in[0,1)$, we can choose any $L>0$ and let 
\begin{align}
\delta_\nu(L):=(1/2)M^{\frac{2}{1-\nu}}L^{-\frac{1+\nu}{1-\nu}}. \label{eq:delta_nu}
\end{align}
If $\nu=1$, we can choose any $L\ge M$ and let $\delta_\nu(L):=0$. 
\end{lemma}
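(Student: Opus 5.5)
Set $r:=\normt{y-x}$ and $s:=\frac{1+\nu}{2}$. By~\eqref{eq:holder} the upper bound $f(y)\le f(x)+\ip{g_x}{y-x}+\frac{M}{1+\nu}r^{1+\nu}$ holds for every $g_x\in\partial f(x)$. So it suffices to show that, for the stated choices of $L$ and $\delta$,
\[
\frac{M}{1+\nu}r^{1+\nu}\le \frac{L}{2}r^2+\delta \qquad \text{for all } r\ge 0.
\]

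The case $\nu=1$ is immediate: $\frac{M}{2}r^2\le\frac{L}{2}r^2$ whenever $L\ge M$, so we may take $\delta=0$.

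For $\nu\in[0,1)$ I will apply Young's inequality to the product $M r^{1+\nu}$, using the conjugate exponents $p=\frac{2}{1+\nu}$ and $q=\frac{2}{1-\nu}$. Splitting the product as $(L^{1/p}r^{1+\nu})(ML^{-1/p})$ gives
\[
M r^{1+\nu}\le \frac{L r^2}{p}+\frac{(ML^{-1/p})^{q}}{q}=\frac{1+\nu}{2}Lr^2+\frac{1-\nu}{2}M^{\frac{2}{1-\nu}}L^{-\frac{1+\nu}{1-\nu}}.
\]
Here I have used $q/p=\frac{1+\nu}{1-\nu}$. Dividing by $1+\nu$ yields
\[
\frac{M}{1+\nu}r^{1+\nu}\le \frac{L}{2}r^2+\frac{1-\nu}{2(1+\nu)}M^{\frac{2}{1-\nu}}L^{-\frac{1+\nu}{1-\nu}}.
\]
Since $\frac{1-\nu}{1+\nu}\le 1$, the last term is at most $\delta_\nu(L)$ as defined in~\eqref{eq:delta_nu}, which gives the claim.

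As a cross-check, one can instead maximize $\phi(r)=\frac{M}{1+\nu}r^{1+\nu}-\frac{L}{2}r^2$ directly. Its maximizer is $r^*=(M/L)^{1/(1-\nu)}$, with value $\frac{1-\nu}{2(1+\nu)}M^{\frac{2}{1-\nu}}L^{-\frac{1+\nu}{1-\nu}}$, which agrees with the Young's inequality bound.

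The only point needing care is the exponent bookkeeping in the Young step. In particular, the case $\nu=0$ gives $q=2$ and $\delta=M^2/(2L)$, which matches~\eqref{eq:delta_nu}. There is no real obstacle beyond this.
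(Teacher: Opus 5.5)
Your proof is correct and is essentially the argument behind the paper's citation of \cite[Lemma~1]{Nest_15}: use the H\"older upper bound to reduce to the scalar inequality $\frac{M}{1+\nu}t^{1+\nu}\le\frac{L}{2}t^2+\delta$, then close it with Young's inequality using exponents $\frac{2}{1+\nu}$ and $\frac{2}{1-\nu}$. You get the sharper constant $\frac{1-\nu}{2(1+\nu)}$, which the paper loosens to $\frac12$ in~\eqref{eq:delta_nu}.
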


As mentioned in~\cite[Section~2.3.c]{Dev_14}, Lemma~\ref{lem:inexact} suggests that any $f$ satisfying~\eqref{eq:holder} actually admits a first-order $(\delta,L)$-inexact oracle on $\bbR^m$ with exact first-order information (cf.~\cite[Definition~1]{Dev_14}), where $\delta:=\delta_\nu(L)$. 
As we shall see later, this viewpoint indeed provides us some insight into the improved LMO complexity by Algorithm~\ref{algo:CG} for solving~\eqref{eq:P} 
 as compared to the classical FW method. 


Next, we record the following standard result (see e.g.,~\cite[Proof of Definition~3.1]{Kerd_21}). 

\begin{lemma} \label{lem:inexact_lb}
If $f$ satisfies~\eqref{eq:inexact_quadratic}, then 
\begin{equation}
f(y)\ge  f(x) + \ip{g_x}{y-x} +  \frac{1}{2L}\norm{g_y - g_x}^{2} - \delta, \quad \forall\,g_x\in \partial f(x),\; \forall\,g_y\in \partial f(y), \quad \forall\, x,y\in \bbR^m. 
\label{eq:inexact_lb}
\end{equation}
Consequently, we have 
\begin{equation}
\norm{g_y - g_x}^{2} \le L^2\norm{y - x}^{2} + 4L\delta,  \quad \forall\,g_x\in \partial f(x),\; \forall\,g_y\in \partial f(y), \quad \forall\, x,y\in \bbR^m. \label{eq:inexact_grad_norm_lb}
\end{equation}
\end{lemma}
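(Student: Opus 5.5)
The plan is the standard co-coercivity argument for smooth convex functions, using the inexact upper model \eqref{eq:inexact_quadratic} in place of the exact descent lemma. Fix $x,y\in\bbR^m$, $g_x\in\partial f(x)$ and $g_y\in\partial f(y)$, and define
\begin{equation*}
\phi(z):=f(z)-\ip{g_x}{z}.
\end{equation*}
The function $\phi$ is convex and $0\in\partial\phi(x)$, so $x$ minimizes $\phi$ over $\bbR^m$. Since $\phi$ differs from $f$ only by a linear term, it satisfies \eqref{eq:inexact_quadratic} with the same $(L,\delta)$, and $g_y-g_x\in\partial\phi(y)$.

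Next, bound $\min\phi$ from above. Apply \eqref{eq:inexact_quadratic} to $\phi$ at the base point $y$, with subgradient $g_y-g_x$, and evaluate at $z=y-\frac1L(g_y-g_x)$. This gives
\begin{equation*}
\phi(x)\le\phi(z)\le \phi(y)-\frac{1}{2L}\norm{g_y-g_x}^2+\delta .
\end{equation*}
Substituting the definition of $\phi$ and rearranging gives
\begin{equation*}
f(y)\ge f(x)+\ip{g_x}{y-x}+\frac1{2L}\norm{g_y-g_x}^2-\delta,
\end{equation*}
which is \eqref{eq:inexact_lb}. Note that convexity is used only to identify $x$ as the global minimizer of $\phi$; this identification is the one step that needs care.

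For \eqref{eq:inexact_grad_norm_lb}, swap the roles of $x$ and $y$ in \eqref{eq:inexact_lb} and add the two inequalities. This yields
\begin{equation*}
\frac1L\norm{g_y-g_x}^2-2\delta\le\ip{g_y-g_x}{y-x}\le\norm{g_y-g_x}\norm{y-x}.
\end{equation*}
Write $t:=\norm{g_y-g_x}$ and $s:=\norm{y-x}$, so the bound reads $t^2\le Lst+2L\delta$. Applying Young's inequality $Lst\le \frac{t^2}{2}+\frac{L^2s^2}{2}$ gives
\begin{equation*}
\frac{t^2}{2}\le\frac{L^2s^2}{2}+2L\delta,
\end{equation*}
that is, $t^2\le L^2s^2+4L\delta$, which is exactly the claimed constant. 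Once the auxiliary function $\phi$ is in place, the rest is routine algebra.
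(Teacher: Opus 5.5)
Your proof is correct. You minimize $\phi(z)=f(z)-\ip{g_x}{z}$ at $x$ by convexity, bound $\phi(x)$ via the inexact upper model at $y$ with the step $-\frac{1}{L}(g_y-g_x)$, then symmetrize and apply Young's inequality, which yields both \eqref{eq:inexact_lb} and the exact constant $4L\delta$. The paper gives no proof of its own, only recording the lemma as standard with a citation, and your argument is exactly that standard co-coercivity proof.
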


%

Let us now present our analysis of Algorithm~\ref{algo:CG}. 
We shall focus on the general case where $f$ satisfies~\eqref{eq:inexact_quadratic}, and then specialize our results to the case where $f$ is H\"older smooth (cf.~\eqref{eq:holder}).  
Let $(x^*,y^*)$ be an optimal solution of~\eqref{eq:Ps} and $\lambda^*$ an optimal solution of~\eqref{eq:P}. From~\eqref{eq:opt_Fenchel_dual}, we know that $(x^*,y^*,\lambda^*)$ satisfy that 
\begin{equation}
Ay^* = x^*, \quad \lambda^*\in \partial f(x^*), \quad -A^*\lambda^* \in \partial g(y^*). \label{eq:KKT}
\end{equation}
For convenience, define the following Bregman divergences induced by $f$ and $g$, respectively:
\begin{align}
D_f(x,x^*)&:= f(x) - f(x^*) - \ipt{\lambda^*}{x - x^*},    \;\;\;\; \quad \forall\,x\in\bbR^m, \\
 D_g(y,y^*) &:= g(y) - g(y^*) + \ipt{\lambda^*}{Ay - Ay^*}, \quad \forall\,y\in\R^n. 
\end{align}

The starting point of our analysis is the following lemma, which bounds the sub-optimality gap by several key quantities that will be further analyzed. 

\begin{lemma} \label{lem:roadmap}
Let $f$ satisfy~\eqref{eq:inexact_quadratic}. 
 For any $x\in\bbR^m$ and $y\in\bbR^n$, we have 
\begin{align}
 p(y)-p^*  \le 2\Df(x,x^*)+\Dg(y,y^*)+ L\normt{Ay-x}^2 +2\delta.
 \label{eq:objective-decomposition}
\end{align}
\end{lemma}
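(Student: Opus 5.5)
The plan is to split the gap through the point $Ay$ and reduce everything to Bregman quantities. Since $p^*=f(x^*)+g(y^*)$ with $x^*=Ay^*$, we can add and subtract $\ipt{\lambda^*}{Ay-x^*}$. Using the definition of $\Dg$, this gives the exact identity
\begin{equation*}
p(y)-p^* = \bigl[f(Ay)-f(x^*)-\ipt{\lambda^*}{Ay-x^*}\bigr] + \Dg(y,y^*) = \Df(Ay,x^*)+\Dg(y,y^*).
\end{equation*}
It therefore suffices to show $\Df(Ay,x^*)\le 2\Df(x,x^*)+L\normt{Ay-x}^2+2\delta$ for arbitrary $x\in\bbR^m$. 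In words, we compare the Bregman divergence at $Ay$ with the one at $x$.

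For this comparison, fix any $g_x\in\partial f(x)$ and apply the upper bound~\eqref{eq:inexact_quadratic} with base point $x$ and evaluation point $Ay$. This gives $f(Ay)\le f(x)+\ipt{g_x}{Ay-x}+\frac{L}{2}\normt{Ay-x}^2+\delta$. Subtracting $f(x^*)+\ipt{\lambda^*}{Ay-x^*}$ from both sides and regrouping yields
\begin{equation*}
\Df(Ay,x^*)\le \Df(x,x^*)+\ipt{g_x-\lambda^*}{Ay-x}+\tfrac{L}{2}\normt{Ay-x}^2+\delta .
\end{equation*}

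The only genuinely nontrivial step is controlling the cross term $\ipt{g_x-\lambda^*}{Ay-x}$. I would apply Young's inequality with weight $L$:
\begin{equation*}
\ipt{g_x-\lambda^*}{Ay-x}\le \tfrac{1}{2L}\normt{g_x-\lambda^*}^2+\tfrac{L}{2}\normt{Ay-x}^2 .
\end{equation*}
The gradient-difference term is then absorbed using Lemma~\ref{lem:inexact_lb}. Apply~\eqref{eq:inexact_lb} with base point $x^*$ and evaluation point $x$; this is legitimate because $\lambda^*\in\partial f(x^*)$ by~\eqref{eq:KKT}, and $g_x\in\partial f(x)$. It reads $f(x)\ge f(x^*)+\ipt{\lambda^*}{x-x^*}+\frac{1}{2L}\normt{g_x-\lambda^*}^2-\delta$, i.e.,
\begin{equation*}
\tfrac{1}{2L}\normt{g_x-\lambda^*}^2\le \Df(x,x^*)+\delta .
\end{equation*}

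Combining the three displays gives $\Df(Ay,x^*)\le 2\Df(x,x^*)+L\normt{Ay-x}^2+2\delta$. Adding $\Dg(y,y^*)$ to both sides and using the identity from the first step yields~\eqref{eq:objective-decomposition}. The factor $2$ on $\Df$ and on $\delta$ arises precisely from invoking~\eqref{eq:inexact_quadratic} and~\eqref{eq:inexact_lb} once each. The factor $L$ on $\normt{Ay-x}^2$ is the sum of the two $\frac{L}{2}$ contributions, one from~\eqref{eq:inexact_quadratic} and one from Young's inequality. The main point to get right is choosing Young's weight as exactly $L$, so that the gradient term matches the coefficient $\frac{1}{2L}$ in Lemma~\ref{lem:inexact_lb}.
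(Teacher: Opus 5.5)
Your proposal is correct and matches the paper's proof. The paper writes the gap in one step as $\Df(x,x^*)+\Dg(y,y^*)+[f(Ay)-f(x)-\ipt{s}{Ay-x}]+\ipt{s-\lambda^*}{Ay-x}$ with $s\in\partial f(x)$, which is your identity and your regrouping merged. It then uses the same three ingredients: Young's inequality with weight $L$, Lemma~\ref{lem:inexact_lb} at $x^*$ with $\lambda^*\in\partial f(x^*)$, and~\eqref{eq:inexact_quadratic} at base point $x$.
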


\begin{proof}
Fix any $x\in\bbR^m$ and $y\in\bbR^n$. Since $Ay^* = x^*$, we have 
\begin{align*}
&f(Ay)+g(y)-\bigl(f(Ay^*)+g(y^*)\bigr)\\
&={f(x)-f(x^*)-\ip{\lambda^*}{x-x^*}}+{g(y)-g(y^*)+\ip{\lambda^*}{Ay-x^*}}+f(Ay)-f(x)-\ip{\lambda^*}{Ay-x}\\
&=
\Df(x,x^*)+\Dg(y,y^*)+\Df(Ay,x) 
+\ip{s-\lambda^*}{Ay-x}, 
\qquad \where\quad s\in\partial f(x).\nt \label{eq:gap_ineq0}
\end{align*}
By Young's inequality, Lemma~\ref{lem:inexact_lb} and that $\lambda^*\in \partial f(x^*)$, we have 
\begin{align}
\ip{s-\lambda^*}{Ay-x} &\le  \frac{1}{2L}\norm{s-\lambda^*}^2+\frac{L}{2}\norm{Ay-x}^2\le D_f(x,x^*) + \delta +\frac{L}{2}\norm{Ay-x}^2. \label{eq:gap_ineq1}
\end{align}
In addition, by~\eqref{eq:inexact_quadratic}, we have 
 \begin{align}
\Df(Ay,x) &\le \frac{L}{2}\norm{Ay-x}^2+\delta.\label{eq:gap_ineq2}
\end{align}
Combining~\eqref{eq:gap_ineq0},~\eqref{eq:gap_ineq1} and~\eqref{eq:gap_ineq2}, we then have~\eqref{eq:objective-decomposition}. 
\end{proof}

Lemma~\ref{lem:roadmap} effectively provides us with a road map to analyze the convergence rate of Algorithm~\ref{algo:CG}: we need to construct some suitable $x\in\bbR^m$ and $y\in\bbR^n$ from  $\{x_{k}\}_{k\ge 0}$ and  $\{y_{k}\}_{k\ge 0}$, respectively, such that 
$\Df(x,x^*)$, $\Dg(y,y^*)$ and $\normt{Ay-x}^2$ all decay at sufficiently fast rates. In fact, our subsequent analysis precisely accomplishes these goals. 

As a key component in our analysis, we identify the following choice of the  Lyapunov sequence: 
\begin{align}
V_k:=\norm{\lambda_{k} -\lambda^*}^2 + \rho_k{\eta_k}\norm{x_k-x^*}^2, \quad \forall\,k\ge 0. 
\end{align}
Indeed, $V_k$ can be interpreted as a linear combination of $\norm{\lambda_{k} -\lambda^*}^2$ and $\norm{x_k-x^*}^2$, which are the squared Euclidean distances from the dual iterate $\lambda_k$ 
and primal iterates $x_{k}$ 
to the corresponding dual and primal optimal solutions 
$\lambda^*$ and $x^*$, respectively. 

The following proposition establishes the recursion of the Lyapunov sequence $\{V_{k}\}_{k\ge 0}$, which lays the foundation of our analysis.

\begin{prop}
Let $f$ satisfy~\eqref{eq:inexact_quadratic}. 
In Algorithm~\ref{algo:CG}, if we choose
\begin{equation}
\eta_k\ge L, \qquad \rho_k\eta_k\ge \rho_{k+1}\eta_{k+1}, \quad \forall\,k\ge 0, \label{eq:param_choices}
\end{equation}
then 
\begin{align}
\rho_k\big(D_f(x_{k+1},x^*)+2D_g(y_{k+1},y^*)\big) \le V_k - \bigg(1+\frac{\rho_k}{2\eta_k}\bigg)V_{k+1}   +{\rho_k^2} D^2  + 3\rho_k\delta, \quad \forall\,k\ge 0. \label{eq:master_recursion}
\end{align}

\end{prop}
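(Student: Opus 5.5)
The plan is to combine the optimality conditions of the three updates with the KKT system \eqref{eq:KKT}. I then use three-point (polarization) identities to produce telescoping differences of $\norm{\lambda_k-\lambda^*}^2$ and $\rho_k\eta_k\norm{x_k-x^*}^2$. The bound is first derived for the unweighted quantities and then multiplied by $\rho_k$.

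\textbf{Step 1 (optimality conditions).} From the $x$-update \eqref{eq:x-update},
\[
s_k-\lambda_k+\eta_k(x_{k+1}-x_k)+\rho_k(x_{k+1}-Ay_{k+1})=0 .
\]
Combined with the dual update, this gives the key identity $\lambda_{k+1}=s_k+\eta_k(x_{k+1}-x_k)$. The $y$-update \eqref{eq:y-update} gives $-A^*\lambda_k\in\partial g(y_{k+1})$. Hence $g(y^*)\ge g(y_{k+1})-\ip{\lambda_k}{Ay^*-Ay_{k+1}}$. Together with $-A^*\lambda^*\in\partial g(y^*)$, which makes $\Dg(y_{k+1},y^*)\ge 0$ and supplies the symmetric inequality, I obtain
\[
2\Dg(y_{k+1},y^*)\le 2\ip{\lambda^*-\lambda_k}{Ay_{k+1}-x^*} .
\]
I will keep the factor $2$, since half of it is needed to absorb a cross term later.

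\textbf{Step 2 ($f$-part).} I apply \eqref{eq:inexact_quadratic} at $x_k$ to bound $f(x_{k+1})$. I apply the lower bound \eqref{eq:inexact_lb} between $x_k$ and $x^*$, with $g_{x^*}=\lambda^*$. Subtracting $f(x^*)+\ip{\lambda^*}{x_{k+1}-x^*}$ yields
\[
\Df(x_{k+1},x^*)\le \ip{s_k-\lambda^*}{x_{k+1}-x^*}+\frac{L}{2}\norm{x_{k+1}-x_k}^2-\frac{1}{2L}\norm{s_k-\lambda^*}^2+2\delta .
\]
I then substitute $s_k=\lambda_{k+1}-\eta_k(x_{k+1}-x_k)$. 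This produces $\ip{\lambda_{k+1}-\lambda^*}{x_{k+1}-x^*}$ and a term $-\eta_k\ip{x_{k+1}-x_k}{x_{k+1}-x^*}$. Polarization turns the latter into
\[
\tfrac{\eta_k}{2}\big(\norm{x_k-x^*}^2-\norm{x_{k+1}-x^*}^2-\norm{x_{k+1}-x_k}^2\big),
\]
and the $-\frac{\eta_k}{2}\norm{x_{k+1}-x_k}^2$ absorbs $\frac L2\norm{x_{k+1}-x_k}^2$ because $\eta_k\ge L$.

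\textbf{Step 3 (cross terms and telescoping).} I write $Ay_{k+1}-x^*=(\lambda_{k+1}-\lambda_k)/\rho_k+(x_{k+1}-x^*)$. One copy of $\ip{\lambda^*-\lambda_k}{Ay_{k+1}-x^*}$ then splits into two pieces. The first is $\frac1{\rho_k}\ip{\lambda^*-\lambda_k}{\lambda_{k+1}-\lambda_k}$, which by polarization gives
\[
\frac{1}{2\rho_k}\big(\norm{\lambda_k-\lambda^*}^2-\norm{\lambda_{k+1}-\lambda^*}^2+\norm{\lambda_{k+1}-\lambda_k}^2\big).
\]
The second is $\ip{\lambda^*-\lambda_k}{x_{k+1}-x^*}$, which nearly cancels the term $\ip{\lambda_{k+1}-\lambda^*}{x_{k+1}-x^*}$ from Step 2. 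The mismatch $\ip{\lambda_{k+1}-\lambda_k}{x_{k+1}-x^*}$ is handled by the second copy of the $\Dg$ inequality. For it I use the other decomposition $\lambda_k=\lambda_{k+1}-\rho_k(Ay_{k+1}-x_{k+1})$, so that the leftover pairs $\rho_k(Ay_{k+1}-x_{k+1})$ against $Ay_{k+1}-Ay^*$; both of the latter points lie in $A(\dom g)$.

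\textbf{Step 4 (contraction factor and scaling).} To get the $(1+\rho_k/(2\eta_k))$ contraction, I bound $\norm{\lambda_{k+1}-\lambda^*}^2\le 2\norm{s_k-\lambda^*}^2+2\eta_k^2\norm{x_{k+1}-x_k}^2$. The negative terms $-\frac1{2L}\norm{s_k-\lambda^*}^2$ (with $\frac1{2L}\ge\frac1{2\eta_k}$) and the residual $x$-difference term then produce $-\frac{1}{4\eta_k}\norm{\lambda_{k+1}-\lambda^*}^2$. Multiplying through by $2\rho_k$ gives $-\frac{\rho_k}{2\eta_k}\norm{\lambda_{k+1}-\lambda^*}^2$, and a similar share yields the $x$-component of $V_{k+1}$. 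Finally, $\rho_k\eta_k\ge\rho_{k+1}\eta_{k+1}$ lets me replace $\rho_k\eta_k\norm{x_{k+1}-x^*}^2$ by the term $\rho_{k+1}\eta_{k+1}\norm{x_{k+1}-x^*}^2$ appearing in $V_{k+1}$.

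\textbf{Main obstacle.} The main obstacle is producing exactly $\rho_k^2D^2$ from the term $\norm{\lambda_{k+1}-\lambda_k}^2=\rho_k^2\norm{Ay_{k+1}-x_{k+1}}^2$. The point $x_{k+1}$ need not lie in $A(\dom g)$, so this term cannot be bounded by $D^2$ directly. My first attempt is the Step 3 manoeuvre: I would aim for the bound
\[
\norm{Ay_{k+1}-x_{k+1}}^2\le \ip{Ay_{k+1}-x_{k+1}}{Ay_{k+1}-Ay^*}+(\text{terms absorbable by } x_{k+1}-x^*)
\]
and then use Young's inequality with $\norm{Ay_{k+1}-Ay^*}\le D$. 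If this does not close, I would instead carry $-\norm{\lambda_{k+1}-\lambda_k}^2$ from a polarization with the roles of $\lambda_k$ and $\lambda_{k+1}$ reversed, so that only a $D$-bounded inner product remains.
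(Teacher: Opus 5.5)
Your proposal has two genuine gaps. The first is the one you flagged yourself.

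\textbf{The $\rho_k^2D^2$ term.} A second copy of the $y$-inequality is neither needed nor usable. With a single copy, the mismatch $\ip{\lambda_{k+1}-\lambda_k}{x_{k+1}-x^*}=\rho_k\ip{Ay_{k+1}-x_{k+1}}{x_{k+1}-x^*}$ pairs exactly with the term $\frac{1}{2\rho_k}\norm{\lambda_{k+1}-\lambda_k}^2=\frac{\rho_k}{2}\norm{Ay_{k+1}-x_{k+1}}^2$ that your polarization produced:
\[
\frac{\rho_k}{2}\norm{Ay_{k+1}-x_{k+1}}^2+\rho_k\ip{Ay_{k+1}-x_{k+1}}{x_{k+1}-x^*}=\frac{\rho_k}{2}\Big(\norm{Ay_{k+1}-x^*}^2-\norm{x_{k+1}-x^*}^2\Big).
\]
Since $Ay_{k+1},x^*\in A(\dom g)$, this gives the $D^2$ term. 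The extra $-\frac{\rho_k}{2}\norm{x_{k+1}-x^*}^2$ gives the factor $1+\rho_k/\eta_k\ge 1+\rho_k/(2\eta_k)$ on the $x$-part of $V_{k+1}$; the paper obtains the same identity by polarizing $\rho_k\ip{Ay_{k+1}-x_{k+1}}{x^*-x_{k+1}}$. Your route instead sends the mismatch into a second copy. That leaves $\frac{\rho_k}{2}\norm{Ay_{k+1}-x_{k+1}}^2$ unpaired and adds a sign-indefinite $\ip{\lambda^*-\lambda_{k+1}}{Ay_{k+1}-x^*}$; neither can be bounded in the required form. The factor $2$ on $\Dg$ in the statement comes from multiplying by $2\rho_k$, not from two copies.

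\textbf{The $\lambda$-contraction.} Step 4 fails. In Step 2 you spend $\frac{\eta_k}{2}\norm{x_{k+1}-x_k}^2$ absorbing $\frac L2\norm{x_{k+1}-x_k}^2$, so only $\frac{\eta_k-L}{2}\norm{x_{k+1}-x_k}^2$ remains. Your split $\frac{1}{4\eta_k}\norm{\lambda_{k+1}-\lambda^*}^2\le\frac{1}{2\eta_k}\norm{s_k-\lambda^*}^2+\frac{\eta_k}{2}\norm{x_{k+1}-x_k}^2$ needs all of $\frac{\eta_k}{2}$. The residual even vanishes at $\eta_k=L$, which the statement allows.

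The missing idea is to not apply the smoothness bound to $f(x_{k+1})$ with full weight. Work with the exact quantity
\[
E:=f(x_k)-f(x^*)+\ip{s_k}{x_{k+1}-x_k}+\frac{1}{2L}\norm{s_k-\lambda^*}^2+\frac{\eta_k}{2}\norm{x_{k+1}-x_k}^2-\ip{\lambda^*}{x_{k+1}-x^*}+\Dg(y_{k+1},y^*).
\]
Your own tools (the inexact lower bound at $(x_k,x^*)$, the $x$-optimality identity, one copy of the $y$-inequality, and the display above) bound it from above:
\[
E\le\frac{\eta_k}{2}\norm{x_k-x^*}^2-\frac{\eta_k+\rho_k}{2}\norm{x_{k+1}-x^*}^2+\frac{\rho_k}{2}\norm{Ay_{k+1}-x^*}^2+\frac{1}{2\rho_k}\Big(\norm{\lambda_k-\lambda^*}^2-\norm{\lambda_{k+1}-\lambda^*}^2\Big)+\delta .
\]
Now bound $E$ from below in two ways.
\begin{enumerate}
\item By \eqref{eq:inexact_quadratic} and $\eta_k\ge L$: $E\ge\Df(x_{k+1},x^*)+\Dg(y_{k+1},y^*)-\delta$.
\item By $\frac{1}{2L}\ge\frac{1}{2\eta_k}$ and completing the square, $\frac{1}{2\eta_k}\norm{s_k-\lambda^*+\eta_k(x_{k+1}-x_k)}^2=\frac{1}{2\eta_k}\norm{\lambda_{k+1}-\lambda^*}^2$: $E\ge\Df(x_k,x^*)+\frac{1}{2\eta_k}\norm{\lambda_{k+1}-\lambda^*}^2+\Dg(y_{k+1},y^*)$.
\end{enumerate}
Averaging the two and dropping $\Df(x_k,x^*)\ge 0$ trades half of $\Df(x_{k+1},x^*)$ for the $\lambda$-contraction. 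After multiplying by $2\rho_k$, this is why the statement has $\rho_k\Df$ but $2\rho_k\Dg$. It is also why the error is $3\rho_k\delta$: your Step 2 carries $2\delta$ at full weight, which would give $4\rho_k\delta$.
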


\begin{proof}
The first-order optimality condition for~\eqref{eq:x-update} is
\begin{align}
 &\; s_k+\eta_k(x_{k+1}-x_k) -\lambda_{k+1}=s_k-\lambda_k+\eta_k(x_{k+1}-x_k) -\rho_k(Ay_{k+1}-x_{k+1})=0\label{eq:x-optimality0}
\end{align}
Fix any $x\in\bbR^m$. Note that~\eqref{eq:x-optimality0} is equivalent to 
\begin{align}
 &  \ip{s_k+\eta_k(x_{k+1}-x_k) -\lambda_{k+1}}{x-x_{k+1}}=0 \\ 
\Longleftrightarrow \; &  \ip{\lambda_{k+1} -s_k}{x-x_{k+1}}=\eta_k\ip{x_{k+1}-x_k}{x-x_{k+1}}\\
&\qquad \qquad\qquad\qquad\quad\;\, \eqa\frac{\eta_k}{2}\Big(\norm{x-x_k}^2-\norm{x-x_{k+1}}^2-\norm{x_{k+1}-x_k}^2\Big), 
 \label{eq:x-optimality}
 \end{align}
 where in (a) we use the three-point identity:
 \begin{equation}
\ipt{z-y}{x-y} =\frac{1}{2}\Big(\norm{x-y}^2  - \norm{x-z}^2+ \norm{y-z}^2 \Big), \quad \forall\, x,y,z\in\bbR^m. \label{eq:three_pt}
\end{equation}
Write
\begin{align}
\ip{\lambda_{k+1} -s_k}{x-x_{k+1}}= \ip{\lambda_{k+1} }{x-x_{k+1}}+ \ip{s_k}{x_{k+1}-x_k} + \ip{s_k}{x_{k}-x}, \label{eq:lambda_k+1_=0}
\end{align}
and by~\eqref{eq:three_pt} and Lemma~\ref{lem:inexact_lb}, we have  
\begin{align}
\hspace{-.3cm}\ip{\lambda_{k+1} }{x-x_{k+1}}  &= \ip{\lambda_{k} }{x-x_{k+1}}+ \rho_k\ip{Ay_{k+1} - x_{k+1}}{x-x_{k+1}} \\
&= \ip{\lambda_{k}  }{x-x_{k+1}}+ \frac{\rho_k}{2} \big(\norm{x-x_{k+1}}^2  - \norm{x-Ay_{k+1}}^2+ \norm{Ay_{k+1}-x_{k+1}}^2 \big),  \label{eq:lambda_k+1_=}\\
\ip{s_k}{x_{k}-x} &\ge  f(x_k) - f(x) + \frac{1}{2L}\norm{s_k - s}^{2} - \delta, \quad  \forall\,s\in \partial f(x). \label{eq:s_k_lb}
\end{align}
In addition, note that the first-order optimality condition for~\eqref{eq:y-update} is: 
\begin{align}
  g(y_{k+1})-g(y) +\ip{\lambda_k}{Ay_{k+1}-Ay}  \le 0, \quad \forall\, y\in \R^n. 
 \label{eq:y-vi}
\end{align}

Now,  choose 
$(x,y) = (x^*,y^*)$ and $s = \lambda^*$ (cf.~\eqref{eq:KKT}).  Combining~\eqref{eq:x-optimality},~\eqref{eq:lambda_k+1_=0},~\eqref{eq:lambda_k+1_=},~\eqref{eq:s_k_lb} and~\eqref{eq:y-vi},  and noting that $Ay^* = x^*$, 
we have 
\begin{align}
\begin{split}
&\ip{\lambda_{k}  }{Ay_{k+1}-x_{k+1}}+ \frac{\rho_k}{2} \norm{Ay_{k+1}-x_{k+1}}^2  \\
& + f(x_k) - f(x^*)+ \ip{s_k}{x_{k+1}-x_k}  + \frac{1}{2L}\norm{s_k - \lambda^*}^{2} + \frac{\eta_k}{2}\norm{x_{k+1}-x_k}^2 +g(y_{k+1})-g(y^*)\\
&\le \frac{\eta_k}{2}(\norm{x^*-x_k}^2-\norm{x^*-x_{k+1}}^2)  +\frac{\rho_k}{2} (\norm{x^*-Ay_{k+1}}^2-\norm{x^*-x_{k+1}}^2 )  + \delta. 
\end{split}
\label{eq:telescoped0}
\end{align}
In addition, with some simple algebra, we have 
\begin{align}
&\ip{\lambda_{k}  }{Ay_{k+1}-x_{k+1}}+ \frac{\rho_k}{2} \norm{Ay_{k+1}-x_{k+1}}^2 \\
&= \frac{1}{2\rho_k}(  \norm{\lambda_{k} +\rho_k(Ay_{k+1}-x_{k+1})}^2 - \norm{\lambda_{k} }^2)\\
&= \frac{1}{2\rho_k}(  \norm{\lambda_{k+1} }^2 - \norm{\lambda_{k} }^2)\\
&= \frac{1}{2\rho_k}(  \norm{\lambda_{k+1} -\lambda^*}^2 - \norm{\lambda_{k} -\lambda^*}^2+2\ip{\lambda^*}{\lambda_{k+1}-\lambda_{k}})\\
&= \frac{1}{2\rho_k}(  \norm{\lambda_{k+1} -\lambda^*}^2 - \norm{\lambda_{k} -\lambda^*}^2)+\ip{\lambda^*}{Ay_{k+1} - x_{k+1}}\\
&= \frac{1}{2\rho_k}(  \norm{\lambda_{k+1} -\lambda^*}^2 - \norm{\lambda_{k} -\lambda^*}^2)+\ip{\lambda^*}{Ay_{k+1}-Ay^*} - \ip{\lambda^*}{x_{k+1}-x^*}. \label{eq:lambda_k_Ay_k+1}
\end{align}
Substitute~\eqref{eq:lambda_k_Ay_k+1} into~\eqref{eq:telescoped0}, we have 
\begin{align}
\begin{split}
&  f(x_k) - f(x^*)+ \ip{s_k}{x_{k+1}-x_k}  + \frac{1}{2L}\norm{s_k - \lambda^*}^{2} + \frac{\eta_k}{2}\norm{x_{k+1}-x_k}^2\\
&\qquad\qquad\qquad - \ip{\lambda^*}{x_{k+1}-x^*} +g(y_{k+1})-g(y^*)+\ip{\lambda^*}{Ay_{k+1}-Ay^*} \\
\le \;\; &\frac{\eta_k}{2}\norm{x^*-x_k}^2-\frac{\eta_k+\rho_k}{2}\norm{x^*-x_{k+1}}^2  +\frac{\rho_k}{2} \norm{x^*-Ay_{k+1}}^2\\
&\qquad\qquad\qquad\qquad\qquad +\frac{1}{2\rho_k}\Big(  \norm{\lambda_{k} -\lambda^*}^2 - \norm{\lambda_{k+1} -\lambda^*}^2\Big)  + \delta.
\end{split} \label{eq:telescoped1}
\end{align}
Since $\eta_k\ge L$ (cf.~\eqref{eq:param_choices}), we derive two lower bounds on the left-hand side (LHS) of~\eqref{eq:telescoped1}: 
\begin{align*}
\mbox{LHS of~\eqref{eq:telescoped1}}&\ge f(x_{k+1}) - f(x^*) - \ip{\lambda^*}{x_{k+1}-x^*} + \frac{1}{2L}\norm{s_k - \lambda^*}^{2}  +D_g(y_{k+1},y^*) - \delta\\
&\ge D_f(x_{k+1},x^*)+D_g(y_{k+1},y^*) - \delta \nt\label{eq:LHS_lb1}\\
\mbox{LHS of~\eqref{eq:telescoped1}}&\ge f(x_k) - f(x^*)+ \ip{s_k-\lambda^*}{x_{k+1}-x_k}  + \frac{1}{2L}\norm{s_k - \lambda^*}^{2} + \frac{\eta_k}{2}\norm{x_{k+1}-x_k}^2\\
&\hspace{3cm} - \ip{\lambda^*}{x_{k}-x^*}+D_g(y_{k+1},y^*)\\
&\ge f(x_k) - f(x^*)+ \frac{1}{2\eta_k}\norm{s_k-\lambda^*+\eta_k(x_{k+1}-x_k)}^2- \ip{\lambda^*}{x_{k}-x^*}+D_g(y_{k+1},y^*)\\
&= D_f(x_k,x^*)+ \frac{1}{2\eta_k}\norm{\lambda_{k+1}-\lambda^*}^2+D_g(y_{k+1},y^*),\nt\label{eq:LHS_lb2}
\end{align*}
where the last equality follows from~\eqref{eq:x-optimality0}. Averaging~\eqref{eq:LHS_lb1} and~\eqref{eq:LHS_lb2}, we have 
\begin{align}
\mbox{LHS of~\eqref{eq:telescoped1}}&\ge \frac{D_f(x_{k+1},x^*)+D_f(x_k,x^*)}{2}+D_g(y_{k+1},y^*)+ \frac{1}{4\eta_k}\norm{\lambda_{k+1}-\lambda^*}^2 - \frac{\delta}{2}\\
&\ge \frac{1}{2}D_f(x_{k+1},x^*)+D_g(y_{k+1},y^*)+ \frac{1}{4\eta_k}\norm{\lambda_{k+1}-\lambda^*}^2 - \frac{\delta}{2}.  \label{eq:LHS_lb_ave}
\end{align}
Multiplying both sides of~\eqref{eq:telescoped1} by $2\rho_k$, and using~\eqref{eq:LHS_lb_ave} and $\rho_k\eta_k\ge \rho_{k+1}\eta_{k+1}$ in~\eqref{eq:param_choices}, we have 
\begin{align}
&\rho_k(D_f(x_{k+1},x^*)+2D_g(y_{k+1},y^*)) \\
 &\le \rho_k{\eta_k}\norm{x^*-x_k}^2-\rho_k\eta_k\bigg(1+\frac{\rho_k}{\eta_k}\bigg)\norm{x^*-x_{k+1}}^2  +{\rho_k^2} \norm{x^*-Ay_{k+1}}^2\\
 &\qquad +  \norm{\lambda_{k} -\lambda^*}^2 - \bigg(1+\frac{\rho_k}{2\eta_k}\bigg)\norm{\lambda_{k+1} -\lambda^*}^2  + 3\rho_k\delta\\
 &\le \rho_k{\eta_k}\norm{x^*-x_k}^2-\bigg(1+\frac{\rho_k}{2\eta_k}\bigg)\rho_{k+1}\eta_{k+1}\norm{x^*-x_{k+1}}^2  +{\rho_k^2} \norm{x^*-Ay_{k+1}}^2\\
 &\qquad +  \norm{\lambda_{k} -\lambda^*}^2 - \bigg(1+\frac{\rho_k}{2\eta_k}\bigg)\norm{\lambda_{k+1} -\lambda^*}^2  + 3\rho_k\delta\\
 &\le V_k - \bigg(1+\frac{\rho_k}{2\eta_k}\bigg)V_{k+1}   +{\rho_k^2} D^2  + 3\rho_k\delta,
\end{align}
where the last step follows from $x^*= Ay^*\in A(\dom g)$ and~\eqref{eq:def_D}. 
This completes the proof. 
\end{proof}

The next proposition presents suitable choices of the parameters $\{\eta_k\}_{k\ge 0}$ and $\{\rho_k\}_{k\ge 0}$ based on the recursion in~\eqref{eq:master_recursion}.  

\begin{prop}\label{prop:param_choice}
Let $f$ satisfy~\eqref{eq:inexact_quadratic}. 
In Algorithm~\ref{algo:CG}, choose $\eta_k:=\eta\ge L$ for all $k\ge 0$. 
Given $\beta_0>0$ and a positive sequence $\{\alpha_k\}_{k\ge 0}$,  set 
\begin{equation}
\beta_{k+1}:=\beta_k+\alpha_k \quad \andd \quad \rho_k:= 2\eta\frac{\alpha_k}{\beta_k},
\quad \forall\, k\ge 0. 
 \label{eq:beta-definition}
\end{equation}
In particular, by choosing  $\beta_0=1$, $\alpha_k = k+2$ for $k\ge 0$, and $\{\rho_k\}_{k\ge 0}$ according to~\eqref{eq:beta-definition}, then 
\begin{align}
&   D_f(\bar x_k,x^*)+2D_g(\bar y_k,y^*) \le \frac{ V_0 + 16\eta^2D^2k  }{k(k+3)\eta}+ 3\delta, \label{eq:ub_Bregman_div}\\
 &  \norm{\lambda_{k} -\lambda^*}^2 \le  V_{k}\le \frac{ 2(V_0   + 16\eta^2D^2k ) }{(k+1)(k+2)} + 6\eta\delta,
 \end{align}
where
\begin{align}
 \bar x_k:=\frac{\sum_{i=0}^{k-1}\alpha_i x_{i+1}}{\sum_{i=0}^{k-1}\alpha_i} \quad \andd \quad \bar y_k:=\frac{\sum_{i=0}^{k-1}\alpha_i y_{i+1}}{\sum_{i=0}^{k-1}\alpha_i}.
 \label{eq:def_averages}
\end{align}
Consequently, we have 
\begin{align}
\normt{A\bary_k - \barx_k}^2 &\le \frac{7V_0}{\eta^2(k+3)^2} + \frac{48D^2}{k+3} + \frac{7\delta}{\eta}. \label{eq:barr_k}
\end{align}
\end{prop}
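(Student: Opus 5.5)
The plan is to turn the one-step recursion~\eqref{eq:master_recursion} into a telescoping sum by multiplying it by $\beta_k$, and then to apply Jensen's inequality to the averages~\eqref{eq:def_averages}. With $\eta_k\equiv\eta$ and $\rho_k=2\eta\alpha_k/\beta_k$, we have $1+\rho_k/(2\eta)=(\beta_k+\alpha_k)/\beta_k=\beta_{k+1}/\beta_k$. Multiplying~\eqref{eq:master_recursion} by $\beta_k$ therefore gives
\begin{align*}
2\eta\alpha_k\big(D_f(x_{k+1},x^*)+2D_g(y_{k+1},y^*)\big)\le \beta_kV_k-\beta_{k+1}V_{k+1}+\beta_k\rho_k^2D^2+6\eta\alpha_k\delta .
\end{align*}
For the concrete choice $\beta_0=1$, $\alpha_k=k+2$, I first compute $\beta_k=1+\sum_{i<k}(i+2)=(k+1)(k+2)/2$. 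This gives $\rho_k=4\eta/(k+1)$, so $\rho_k\eta_k$ is nonincreasing and~\eqref{eq:param_choices} holds, which is what licenses the use of the proposition. It also gives $\beta_k\rho_k^2=8\eta^2(k+2)/(k+1)\le 16\eta^2$.

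Summing the displayed inequality over $i=0,\dots,k-1$ and using $\sum_{i<k}\alpha_i=\beta_k-1=k(k+3)/2$ yields
\[
\eta k(k+3)\cdot\tfrac{\sum_i\alpha_i(D_f(x_{i+1},x^*)+2D_g(y_{i+1},y^*))}{\sum_i\alpha_i}+\beta_kV_k\le V_0+16\eta^2D^2k+3\eta\delta\, k(k+3).
\]
Both $D_f(\cdot,x^*)$ and $D_g(\cdot,y^*)$ are nonnegative by~\eqref{eq:KKT}, and they are convex in their first argument. So I can drop either term on the left and apply Jensen's inequality to the weighted averages $\bar x_k,\bar y_k$. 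This gives~\eqref{eq:ub_Bregman_div} directly. It also gives $V_k\le (V_0+16\eta^2D^2k)/\beta_k+6\eta\delta(\beta_k-1)/\beta_k$, which is the claimed bound on $V_k$. The inequality $\|\lambda_k-\lambda^*\|^2\le V_k$ is immediate from the definition of $V_k$.

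For~\eqref{eq:barr_k}, I use the dual update $Ay_{i+1}-x_{i+1}=(\lambda_{i+1}-\lambda_i)/\rho_i$ together with $\alpha_i/\rho_i=\beta_i/(2\eta)$. Writing $S_k:=\sum_{i<k}\alpha_i$, this gives
\[
A\bar y_k-\bar x_k=\frac{1}{2\eta S_k}\sum_{i=0}^{k-1}\beta_i(\lambda_{i+1}-\lambda_i).
\]
Setting $\mu_i:=\lambda_i-\lambda^*$ and applying Abel summation, the sum equals $\beta_{k-1}\mu_k-\beta_0\mu_0-\sum_{i=1}^{k-1}\alpha_{i-1}\mu_i$. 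The triangle inequality then bounds its norm by $\beta_{k-1}\sqrt{V_k}+\sqrt{V_0}+\sum_{i=1}^{k-1}\alpha_{i-1}\sqrt{V_i}$. I then insert the $V_i$ bound, using $\sqrt{a+b+c}\le\sqrt a+\sqrt b+\sqrt c$ to split it into its $V_0$, $D^2$ and $\delta$ parts. Each part contributes at the expected order:
\begin{itemize}
\item the $V_0$ part gives $O(\sqrt{V_0})$ in total, hence $O(V_0/(\eta^2k^2))$ after dividing by $2\eta S_k\sim\eta k^2$ and squaring;
\item the $D$ part is $\sqrt{V_i}\lesssim\eta D/\sqrt i$ weighted by $\alpha\sim i$, so it sums to $O(\eta Dk^{3/2})$ and gives $O(D^2/k)$;
\item the $\delta$ part is $\sqrt{6\eta\delta}$ times total weight $O(k^2)$, so it gives $O(\delta/\eta)$.
\end{itemize}
Squaring with $(a+b+c)^2\le 3(a^2+b^2+c^2)$, or a weighted variant, produces the three-term bound. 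The main obstacle I expect is this last step: tracking constants through the Abel summation carefully enough to land on $7$, $48$ and $7$. I would handle it by bounding $\sum_{i=1}^{k-1}(i+1)/\sqrt{i(i+1)}$ and $\beta_{k-1}/\sqrt{\beta_k}$ explicitly, and then choosing the Young weights to match the stated constants.
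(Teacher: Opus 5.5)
Your derivation of the first two bounds is the paper's argument: multiply \eqref{eq:master_recursion} by $\beta_k$, use $\beta_k(1+\rho_k/(2\eta))=\beta_{k+1}$ and $\beta_k\rho_k^2\le 16\eta^2$, telescope, apply Jensen to the $\alpha$-weighted averages, and drop whichever nonnegative term is not needed. Your Abel summation is also correct.

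The gap is in the last step for \eqref{eq:barr_k}. Taking norms with the triangle inequality and then squaring cannot give the constants $7$, $48$, $7$, however you choose the Young weights. Let $a$, $b$, $c$ denote the $V_0$, $D$ and $\delta$ parts of your norm bound, each divided by $2\eta S_k=\eta k(k+3)$. Carrying out your computation, for all $k\ge 1$,
\[
a^2\le \frac{9}{2}\,\frac{V_0}{\eta^2(k+3)^2},\qquad b^2\le \frac{392}{9}\,\frac{D^2}{k+3},\qquad c^2\le \frac{6\delta}{\eta},
\]
and these bounds are asymptotically tight as $k\to\infty$. As a side point, the $V_0$ part of your sum is of order $k\sqrt{V_0}$, not $\sqrt{V_0}$: already $\beta_{k-1}\sqrt{2V_0/((k+1)(k+2))}\approx k\sqrt{V_0/2}$. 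Only this order is consistent with the final rate $V_0/(\eta^2k^2)$ that you state.

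Now consider any splitting $(a+b+c)^2\le C_1a^2+C_2b^2+C_3c^2$ valid for all $a,b,c\ge 0$. It forces $C_1^{-1}+C_2^{-1}+C_3^{-1}\le 1$. Matching \eqref{eq:barr_k} would need $C_1\le 14/9$, $C_2\le 54/49$ and $C_3\le 7/6$, whose reciprocals sum to about $2.4$. Since $V_0$, $D$ and $\delta$ vary independently, this is not a tuning issue. For large $k$ and suitably balanced $V_0,D,\delta$, the quantity $(a+b+c)^2$ itself exceeds the right-hand side of \eqref{eq:barr_k} by a factor close to $2.4$. So your route gives the correct orders, which is all the later complexity results need. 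With equal weights the constants are roughly $27/2$, $392/3$ and $18$, so it does not prove the inequality as stated.

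The missing idea is to avoid square roots and bound the squared norm directly, as the paper does. Set $\mu_i:=\lambda_i-\lambda^*$ and use $\normt{\sum_{j=1}^J c_j}^2\le J\sum_{j=1}^J\normt{c_j}^2$ after separating the boundary term:
\[
\Big\|\beta_{k-1}\mu_k-\Big(\sum_{i=1}^{k-1}\alpha_{i-1}\mu_i+\beta_0\mu_0\Big)\Big\|^2\le 2\beta_{k-1}^2\normt{\mu_k}^2+2k\Big(\sum_{i=1}^{k-1}\alpha_{i-1}^2\normt{\mu_i}^2+\normt{\mu_0}^2\Big).
\]
Only then insert $\normt{\mu_i}^2\le V_i\le 2(V_0+16\eta^2D^2 i)/((i+1)(i+2))+6\eta\delta$. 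The $V_i$ bound now enters linearly, so the $V_0$, $D^2$ and $\delta$ contributions never interact. Summing them as in the paper gives at most $\frac{1}{2\eta^2}\big(\frac{14V_0+(96k-64)\eta^2D^2}{(k+3)^2}+14\eta\delta\big)$, which is bounded by the right-hand side of \eqref{eq:barr_k}.

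A weighted Cauchy--Schwarz step $(\sum_j w_jt_j)^2\le(\sum_j w_j)\sum_j w_jt_j^2$ after the triangle inequality would also remove the cross terms. However, it introduces a spurious $\log k$ factor in the $V_0$ term, so the unweighted split above is the right choice.
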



\begin{proof}
First of all, note that since $\beta_0=1$ and $\alpha_k = k+2$, we have 
\begin{align}
&\;\beta_k = \frac{(k+1)(k+2)}{2}, \qquad   \sum_{i=0}^{k-1}\alpha_i = \frac{k(k+3)}{2} \quad\andd \quad \frac{\alpha_k}{\beta_k}=\frac{2}{k+1},\qquad\forall\,k\ge 0.  \label{eq:beta_alpha}
\end{align}
Therefore, we have $\rho_k \ge \rho_{k+1}$ for all $k\ge 0$, and the conditions in~\eqref{eq:param_choices} are satisfied. Now, multiplying both sides of~\eqref{eq:master_recursion} by $\beta_k$, then we have 
\begin{align}
2\eta\alpha_k(D_f(x_{k+1},x^*)+2D_g(y_{k+1},y^*)) \le \beta_k V_k - \beta_{k+1}V_{k+1}   + 4\eta^2(\alpha_k^2/\beta_k) D^2  + 6\eta\alpha_k\delta, \quad \forall\,k\ge 0. \nn
\end{align}
Fix any $k\ge 1$. 
Telescoping the above inequality over $i=0,\ldots\,k-1$, then we have 
\begin{align*}
\textstyle 2\eta\sum_{i=0}^{k-1}\alpha_i(D_f(x_{i+1},x^*)+2D_g(y_{i+1},y^*)) \le \beta_0 V_0 - \beta_{k}V_{k}   + 4\eta^2D^2\sum_{i=0}^{k-1}{\alpha^2_i}/{\beta_i}   + 6\eta\delta\sum_{i=0}^{k-1}\alpha_i. 
\end{align*}
Using the convexity of $D_f(\cdot,x^*)$ and $D_g(\cdot,y^*)$ and the definitions of $\bar x_k$ and $\bar y_k$ in~\eqref{eq:def_averages}, we have
\begin{align}
&\quad  0\le 2\eta(D_f(\bar x_k,x^*)+2D_g(\bar y_k,y^*)) \le \frac{\beta_0 V_0- \beta_{k}V_{k}   + 4\eta^2D^2\sum_{i=0}^{k-1}{\alpha^2_i}/{\beta_i}   }{\sum_{i=0}^{k-1}\alpha_i}+ 6\eta\delta\\
\Longrightarrow &\quad  V_{k}\le \frac{\beta_0 V_0   + 4\eta^2D^2\sum_{i=0}^{k-1}{\alpha^2_i}/{\beta_i}   + 6\eta\delta\sum_{i=0}^{k-1}\alpha_i}{\beta_{k}} . 
\end{align} 
Since $\beta_0=1$ and $\alpha_k = k+2$, we have 
\begin{align}
&   D_f(\bar x_k,x^*)+2D_g(\bar y_k,y^*) \le \frac{ V_0 + 16\eta^2D^2k  }{k(k+3)\eta}+ 3\delta,\\
 &  \norm{\lambda_{k} -\lambda^*}^2 \le  V_{k}\le \frac{ 2(V_0   + 16\eta^2D^2k ) }{(k+1)(k+2)} + 6\eta\delta. \label{eq:ub_lambda_k}
\end{align}
%
Next, note that 
\begin{align}
{A\bary_k - \barx_k} =  \frac{\sum_{i=0}^{k-1}\alpha_i (Ay_{i+1} - x_{i+1})}{\sum_{i=0}^{k-1}\alpha_i} &= \frac{\sum_{i=0}^{k-1}(\alpha_i/\rho_i) (\lambda_{i+1} - \lambda_{i})}{\sum_{i=0}^{k-1}\alpha_i}= \frac{\sum_{i=0}^{k-1}\beta_i ((\lambda_{i+1}-\lambda^*) - (\lambda_{i}-\lambda^*))}{2\eta\sum_{i=0}^{k-1}\alpha_i}\nn\\
& = \frac{\beta_{k-1} (\lambda_{k}-\lambda^*)-(\sum_{i=1}^{k-1}\alpha_{i-1}  (\lambda_{i}-\lambda^*) + \beta_0(\lambda_0-\lambda^*))}{2\eta\sum_{i=0}^{k-1}\alpha_i}.\nn
\end{align}
Using the fact that $\normt{\sum_{i=1}^n a_i}^2\le n(\sum_{i=1}^n\normt{ a_i}^2)$, we have 
\begin{align}
\normt{A\bary_k - \barx_k}^2 &\le \frac{2 (\beta_{k-1}^2 \normt{\lambda_{k}-\lambda^*}^2 +\normt{\sum_{i=1}^{k-1}\alpha_{i-1}  (\lambda_{i}-\lambda^*) + \beta_0(\lambda_0-\lambda^*)}^2)}{4\eta^2(\sum_{i=0}^{k-1}\alpha_i)^2}\\
&\le 
\frac{\beta_{k-1}^2 \normt{\lambda_{k}-\lambda^*}^2 +k\sum_{i=1}^{k-1}\alpha_{i-1}^2  \normt{\lambda_{i}-\lambda^*}^2 + k\normt{\lambda_0-\lambda^*}^2}{2\eta^2(\sum_{i=0}^{k-1}\alpha_i)^2}. \label{eq:ub_ave_violation}
\end{align}
By the value of $\{\alpha_k\}_{k\ge 0}$ and $\{\beta_k\}_{k\ge 0}$ in~\eqref{eq:beta_alpha} and~\eqref{eq:ub_lambda_k}, we have 
\begin{align}
\frac{\beta_{k-1}^2 }{(\sum_{i=0}^{k-1}\alpha_i)^2}\normt{\lambda_{k}-\lambda^*}^2 &\le \frac{2V_0 + 32\eta^2D^2k}{(k+3)^2} + 6\eta\delta,\\
k\frac{\sum_{i=1}^{k-1}\alpha_{i-1}^2  \normt{\lambda_{i}-\lambda^*}^2}{(\sum_{i=0}^{k-1}\alpha_i)^2}
&\le \frac{\sum_{i=1}^{k-1}  2V_0 + 32 \eta^2D^2 i + 6\eta\delta(i+1)^2}{k(k+3)^2/4}\\
&\le \frac{  2V_0(k-1) + 16 \eta^2D^2 k(k-1) + \eta\delta k(k+1)(2k+1)}{k(k+3)^2/4}\\
&\le \frac{  8V_0 + 64 \eta^2D^2 (k-1)  }{(k+3)^2}+ 8\eta\delta,\\
k\frac{\normt{\lambda_0-\lambda^*}^2}{(\sum_{i=0}^{k-1}\alpha_i)^2}&\le \frac{4V_0}{k(k+3)^2}\le \frac{4V_0}{(k+3)^2}.
\end{align}
Substituting these bounds into~\eqref{eq:ub_ave_violation}, we then obtain~\eqref{eq:barr_k}.
\end{proof}

\begin{remark}
Note that by the choice of $\{\alpha_k\}_{k\ge 0}$ and $\{\beta_k\}_{k\ge 0}$ in Proposition~\ref{prop:param_choice}, we know that 
\begin{equation}
\rho_k = \frac{4\eta}{k+1}, \quad \forall\, k\ge 0. \label{eq:rho_k} 
\end{equation}
Introducing the auxiliary sequences $\{\alpha_k\}_{k\ge 0}$ and $\{\beta_k\}_{k\ge 0}$ allows us to have a principled way of choosing $\{\rho_k\}_{k\ge 0}$. Indeed, instead of letting $\alpha_k$ grow linearly with $k$, we can choose $\alpha_k = \Theta(k^q)$ for some $q\ge 2$, which leads to different choices of $\{\rho_k\}_{k\ge 0}$, as well as different convergence rates of $\{D_f(\bar x_k,x^*)+2D_g(\bar y_k,y^*)\}_{k\ge 0}$ and $\{V_k\}_{k\ge 0}$. 
In addition, the auxiliary sequences also play important roles in 
defining the weighted averages $\barx_k$ and $\bary_k$ (cf.~\eqref{eq:def_averages}). 
Lastly, it is worth mentioning that the ``auxiliary-sequence'' technique has also been used in analyzing the FW method --- see e.g.,~\cite{Freund_16,Nest_18}. 
\end{remark}

Note that by choosing $x = \barx_k$ and $y = \bary_k$ in Lemma~\ref{lem:roadmap}, and using~\eqref{eq:ub_Bregman_div} and~\eqref{eq:barr_k} in Proposition~\ref{prop:param_choice}, we immediately have the following convergence rate of the sub-optimality gap. 

\begin{theorem}
Let $f$ satisfy~\eqref{eq:inexact_quadratic}. 
In Algorithm~\ref{algo:CG}, under the parameter choices in Proposition~\ref{prop:param_choice}, we have  
\begin{align}
 p(\bary_k)-p^*  \le \frac{9V_0}{k(k+3)\eta}+\frac{ 80\eta D^2  }{k+3}+ 15\delta, \qquad \forall\, k\ge 1. \label{eq:master_rate} 
\end{align}
\end{theorem}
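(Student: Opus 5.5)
The plan is to apply Lemma~\ref{lem:roadmap} with $x=\barx_k$ and $y=\bary_k$, where $\barx_k$ and $\bary_k$ are the weighted averages in~\eqref{eq:def_averages}. This gives
\begin{equation*}
p(\bary_k)-p^*\le 2D_f(\barx_k,x^*)+D_g(\bary_k,y^*)+L\normt{A\bary_k-\barx_k}^2+2\delta .
\end{equation*}
We then bound each term using Proposition~\ref{prop:param_choice}. Since $\bary_k\in\conv\{y_1,\ldots,y_k\}\subseteq\dom g$, the point is feasible, and $D_g(\bary_k,y^*)\ge 0$ by the KKT condition $-A^*\lambda^*\in\partial g(y^*)$ in~\eqref{eq:KKT}. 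Likewise $D_f(\barx_k,x^*)\ge0$ because $\lambda^*\in\partial f(x^*)$. Hence
\begin{equation*}
2D_f(\barx_k,x^*)+D_g(\bary_k,y^*)\le 2\bigl(D_f(\barx_k,x^*)+2D_g(\bary_k,y^*)\bigr),
\end{equation*}
and~\eqref{eq:ub_Bregman_div} bounds this by $\frac{2V_0+32\eta^2D^2k}{k(k+3)\eta}+6\delta$.

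Next we use $k/(k(k+3))=1/(k+3)$, so the $D^2$ piece becomes $32\eta D^2/(k+3)$. For the infeasibility term we use $L\le\eta$ together with~\eqref{eq:barr_k}:
\begin{equation*}
L\normt{A\bary_k-\barx_k}^2\le \frac{7V_0}{\eta(k+3)^2}+\frac{48\eta D^2}{k+3}+7\delta .
\end{equation*}
Since $(k+3)^2\ge k(k+3)$, the $V_0$ part is at most $7V_0/(k(k+3)\eta)$.

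Summing everything, the $V_0$ coefficients give $2+7=9$, the $D^2$ coefficients give $32+48=80$, and the $\delta$ coefficients give $6+7+2=15$. This is exactly~\eqref{eq:master_rate}.

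There is no real obstacle beyond bookkeeping. The only points needing care are the nonnegativity of the Bregman divergences, so that the factor of $2$ on $D_g$ can be absorbed, and the replacement of $L$ by $\eta$, which uses the standing choice $\eta\ge L$.
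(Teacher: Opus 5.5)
Your proposal is correct and follows the paper's proof exactly. You apply Lemma~\ref{lem:roadmap} at $(\barx_k,\bary_k)$, insert the bounds~\eqref{eq:ub_Bregman_div} and~\eqref{eq:barr_k}, and then use $L\le\eta$ and $(k+3)^2\ge k(k+3)$. You also make explicit a step the paper leaves implicit: replacing $2D_f+D_g$ by $2(D_f+2D_g)$ requires $D_g(\bary_k,y^*)\ge 0$, which follows from $-A^*\lambda^*\in\partial g(y^*)$.
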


\begin{proof}
By Lemma~\ref{lem:roadmap} and~\eqref{eq:ub_Bregman_div} and~\eqref{eq:barr_k} in Proposition~\ref{prop:param_choice}, we have 
\begin{align}
 p(\bary_k)-p^*  &\le 2(\Df(\barx_k,x^*)+2\Dg(\bary_k,y^*))+ L\normt{A\bary_k-\barx_k}^2 +2\delta\\
 &\le \frac{ 2V_0 + 32\eta^2D^2k  }{k(k+3)\eta}+ 6\delta + \frac{7LV_0}{\eta^2(k+3)^2} + \frac{48LD^2}{k+3} + \frac{7L\delta}{\eta}+2\delta. 
 \end{align}
 Under the choice that $\eta\ge L$, we complete the proof. 
\end{proof}

Note that the convergence rate in~\eqref{eq:master_rate} depends on $V_0$, and holds for any choice of $x^0\in\bbR^m$, $y^0\in\bbR^n$ and $\lambda^0\in\bbR^m$. One may naturally wonder whether the dependence on $V_0$ can be removed. It turns out under the initialization in Algorithm~\ref{algo:CG}, $V_0$ can indeed by upper bounded by some functions of $\eta$, $D$ and $\delta$. 

\begin{lemma} \label{lem:V_0}
Let $f$ satisfy~\eqref{eq:inexact_quadratic}.  
For all $y_0\in\dom g,$ under the initialization in Algorithm~\ref{algo:CG} and the parameter choices in Proposition~\ref{prop:param_choice}, we have 
\begin{align}
V_0 \le 5\eta^2D^2+4\eta\delta. 
\end{align}
\end{lemma}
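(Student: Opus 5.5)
We need to bound $V_0=\normt{\lambda_0-\lambda^*}^2+\rho_0\eta\normt{x_0-x^*}^2$ under the initialization $x_0=Ay_0$ and $\lambda_0\in\partial f(x_0)$. The parameter choice in Proposition~\ref{prop:param_choice} gives $\rho_0=2\eta\alpha_0/\beta_0=4\eta$. So the plan is to bound the two terms separately.

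\emph{Primal term.} Since $y_0\in\dom g$, we have $x_0=Ay_0\in A(\dom g)$. By~\eqref{eq:KKT}, $x^*=Ay^*\in A(\dom g)$ as well. Definition~\eqref{eq:def_D} then gives $\normt{x_0-x^*}\le D$, and hence
\[
\rho_0\eta\normt{x_0-x^*}^2\le 4\eta^2D^2 .
\]

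\emph{Dual term.} Since $\lambda_0\in\partial f(x_0)$ and $\lambda^*\in\partial f(x^*)$ (cf.~\eqref{eq:KKT}), inequality~\eqref{eq:inexact_grad_norm_lb} of Lemma~\ref{lem:inexact_lb} applies directly:
\[
\normt{\lambda_0-\lambda^*}^2\le L^2\normt{x_0-x^*}^2+4L\delta\le L^2D^2+4L\delta .
\]
Using $L\le\eta$, this is at most $\eta^2D^2+4\eta\delta$.

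Adding the two bounds gives $V_0\le 5\eta^2D^2+4\eta\delta$, which is the claim.

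Most of this is bookkeeping. The one substantive observation is the role of the controlled initialization. Choosing $\lambda_0$ as a subgradient at a feasible point $x_0=Ay_0$ is exactly what lets~\eqref{eq:inexact_grad_norm_lb} control the dual distance $\normt{\lambda_0-\lambda^*}$, which would otherwise be unbounded. The only thing to double-check is that $\rho_0=4\eta$, which follows from $\alpha_0=2$ and $\beta_0=1$, consistent with~\eqref{eq:rho_k}.
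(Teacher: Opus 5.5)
Your proof is correct and follows the paper's argument essentially verbatim. You use $\rho_0=4\eta$, bound the dual term via \eqref{eq:inexact_grad_norm_lb} using $\lambda_0\in\partial f(x_0)$ and $\lambda^*\in\partial f(x^*)$, absorb $L\le\eta$, and use $\normt{x_0-x^*}=\normt{Ay_0-Ay^*}\le D$ to reach $V_0\le 5\eta^2D^2+4\eta\delta$.
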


\begin{proof}
Since $\beta_0=1$ and $\alpha_0=2$, we have  $\rho_0 = 2\eta\alpha_0/\beta_0 = 4\eta$. Since $\lambda_0\in \partial f(x_0)$ and $\lambda^*\in \partial f(x^*)$, by~\eqref{eq:inexact_grad_norm_lb},  we have 
\begin{align}
 V_0 &\le L^2\norm{x_0 - x^*}^{2} + 4L\delta+4\eta^2\norm{x_0-x^*}^2
 \le 5\eta^2 \norm{x_0-x^*}^2 + 4\eta\delta. 
 \label{eq:V0-bound}
\end{align}
Since $\norm{x_0-x^*} = \norm{Ay_0-Ay^*}\le D$, we complete the proof. 
\end{proof}

Based on Lemma~\ref{lem:V_0}, we have the following corollary. 

\begin{corollary}
Let $f$ satisfy~\eqref{eq:inexact_quadratic}. 
In Algorithm~\ref{algo:CG}, under the parameter choices in Proposition~\ref{prop:param_choice}, we have  
\begin{align}
 p(\bary_k)-p^*  
 \le \frac{ 125\eta D^2  }{k+3}+ 24\delta, \qquad \forall\, k\ge 1. \label{eq:rate_eta}
\end{align}
In particular, if $\eta = cL$ for some absolute constant $c\ge 1$, then 
\begin{align}
 p(\bary_k)-p^*   \le \frac{ 125cL D^2  }{k+3}+ 24\delta, \qquad \forall\, k\ge 1. \label{eq:eta=cL}
\end{align}
\end{corollary}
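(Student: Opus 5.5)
The plan is to substitute the bound of Lemma~\ref{lem:V_0} into the rate~\eqref{eq:master_rate}. Everything reduces to collecting constants, using $k\ge 1$ and $\eta\ge L$ as needed.

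\textbf{Step 1: bound the $V_0$ term.} Lemma~\ref{lem:V_0} gives $V_0\le 5\eta^2D^2+4\eta\delta$. Then
\begin{align*}
\frac{9V_0}{k(k+3)\eta}\le \frac{45\eta D^2}{k(k+3)}+\frac{36\delta}{k(k+3)}.
\end{align*}
Since $k\ge 1$, we have $1/k\le 1$, so the first piece is at most $45\eta D^2/(k+3)$. Also $k(k+3)\ge 4$, so the second piece is at most $9\delta$.

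\textbf{Step 2: combine with~\eqref{eq:master_rate}.} Adding the remaining terms $80\eta D^2/(k+3)+15\delta$ gives
\begin{align*}
p(\bary_k)-p^*\le \frac{125\eta D^2}{k+3}+24\delta,
\end{align*}
which is~\eqref{eq:rate_eta}.

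\textbf{Step 3: the special case.} Set $\eta=cL$ with $c\ge 1$. Then $\eta\ge L$, so this choice satisfies the requirement of Proposition~\ref{prop:param_choice}. The lemma and the theorem therefore both apply, and~\eqref{eq:rate_eta} becomes~\eqref{eq:eta=cL} by direct substitution.

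There is no real obstacle. The only points needing care are the inequality $k(k+3)\ge 4$, which is what lets the constant on $\delta$ come out as exactly $24$, and the check that $\eta=cL$ is an admissible parameter choice.
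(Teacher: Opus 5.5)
Your proposal is correct and is the same argument the paper intends (it states the corollary as an immediate consequence of Lemma~\ref{lem:V_0}). You substitute $V_0\le 5\eta^2D^2+4\eta\delta$ into~\eqref{eq:master_rate}, use $k\ge 1$ and $k(k+3)\ge 4$ to get the constants $45+80=125$ and $9+15=24$, and note that $\eta=cL$ with $c\ge 1$ satisfies the requirement $\eta\ge L$.
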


\begin{remark}[Comparison with the FW method]
In the seminal work~\cite{Freund_16}, Freund and Grigas analyzed the convergence rate of the FW method for solving~\eqref{eq:P0}  under the $(\delta,L)$-inexact oracle in~\eqref{eq:inexact_quadratic}. In~\cite[Section 5.2.1]{Freund_16}, they showed that the convergence rate of the FW method 
is of order $O(LD^2/k + k\delta)$. In contrast,~\eqref{eq:eta=cL} shows that the convergence rate of Algorithm~\ref{algo:CG} is of order $O(LD^2/k + \delta)$, which significantly improves the error accumulation term over the FW method. As we shall see shortly, this improved error accumulation precisely explains the improved LMO complexity of Algorithm~\ref{algo:CG} compared to the FW method. 
\end{remark}

Now, let us turn to the case where $f$ is H\"older smooth (cf.~\eqref{eq:holder}). 
 Since $\delta:=\delta_\nu(L)$ depends on $L$, 
 as a standard practice, we can choose $L$ in~\eqref{eq:eta=cL} to achieve the best possible convergence rate in the H\"older-smooth case. This  leads to the following corollary. 

\begin{corollary}\label{cor:holder_rate}
Let $f$ be $(M,\nu)$-H\"older smooth on $\bbR^m$. If $\nu\in[0,1)$,   then for some absolute constant $c>0$, we have 
\begin{align}
p(\bary_K)-p^*  &\le \left({ 125c  D^2  }+ 12M^{\frac{2}{1-\nu}}c^{-\frac{1+\nu}{1-\nu}}\right) K^{-\frac{1+\nu}{2}},\quad \forall\,K\ge 1, \qquad \mbox{if }\;\eta = cK^{\frac{1-\nu}{2}}, \label{eq:holder_rate1}\\
 p(\bary_K)-p^*  &\le \left({ 125c    }+ 12c^{-\frac{1+\nu}{1-\nu}}\right) M D^{\nu+1}K^{-\frac{1+\nu}{2}},\;\;\quad \forall\,K\ge 1, \qquad \mbox{if }\;\eta = cK^{\frac{1-\nu}{2}}MD^{\nu-1}.  \label{eq:holder_rate2}
\end{align}
If $\nu=1$, then by choosing $\eta = cM$ for some absolute constant $c\ge 1$, we have 
\begin{align}
 p(\bary_K)-p^*   \le \frac{ 125cM D^2  }{K+3}, \quad \forall\,K\ge 1.\label{eq:holder_rate_nu=1}
\end{align}
\end{corollary}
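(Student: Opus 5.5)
The plan is to derive Corollary~\ref{cor:holder_rate} directly from the preceding corollary, specifically \eqref{eq:rate_eta}. That bound holds for any $\eta\ge L$ once we set $\delta=\delta_\nu(L)$ from Lemma~\ref{lem:inexact}. The natural move is to take $L=\eta$ itself. The condition $\eta\ge L$ then holds trivially, and for $\nu\in[0,1)$ we get $\delta=\tfrac12 M^{\frac{2}{1-\nu}}\eta^{-\frac{1+\nu}{1-\nu}}$. Substituting into \eqref{eq:rate_eta} and evaluating at $k=K$ gives
\begin{equation*}
p(\bary_K)-p^*\le \frac{125\eta D^2}{K+3}+12M^{\frac{2}{1-\nu}}\eta^{-\frac{1+\nu}{1-\nu}}.
\end{equation*}
The only remaining task is to plug in the two prescribed choices of $\eta$.

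\emph{First choice, $\eta=cK^{\frac{1-\nu}{2}}$.} The first term is at most $125cD^2K^{\frac{1-\nu}{2}}/K=125cD^2K^{-\frac{1+\nu}{2}}$, using $K+3\ge K$. For the second term, $\eta^{-\frac{1+\nu}{1-\nu}}=c^{-\frac{1+\nu}{1-\nu}}K^{-\frac{1+\nu}{2}}$. Together these give \eqref{eq:holder_rate1}.

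\emph{Second choice, $\eta=cK^{\frac{1-\nu}{2}}MD^{\nu-1}$.} The first term becomes $125cMD^{1+\nu}K^{-\frac{1+\nu}{2}}$. For the second term, the exponent bookkeeping works out as
\begin{equation*}
M^{\frac{2}{1-\nu}}\,(MD^{\nu-1})^{-\frac{1+\nu}{1-\nu}}=M^{\frac{2-(1+\nu)}{1-\nu}}D^{1+\nu}=MD^{1+\nu}.
\end{equation*}
This gives \eqref{eq:holder_rate2}.

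For $\nu=1$, Lemma~\ref{lem:inexact} allows $L=M$ with $\delta=0$. Taking $\eta=cM\ge M=L$ with $c\ge1$, the second part of the preceding corollary, \eqref{eq:eta=cL}, yields \eqref{eq:holder_rate_nu=1} directly.

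The main obstacle is only the exponent arithmetic in the second case. One also has to make sure that tying $L$ to $\eta$, rather than fixing $L$ first, is legitimate. It is, because Lemma~\ref{lem:inexact} permits any $L>0$ when $\nu<1$. Finally, the constant $c$ only needs to be positive in the $\nu<1$ case, since $\eta\ge L$ holds automatically there.
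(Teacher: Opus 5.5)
Your proposal is correct and matches the paper's proof: it sets $L=\eta$ in the bound $125\eta D^2/(k+3)+24\delta$ from the preceding corollary. This is legitimate because Lemma~\ref{lem:inexact} allows any $L>0$ when $\nu<1$, and it gives $24\delta_\nu(\eta)=12M^{\frac{2}{1-\nu}}\eta^{-\frac{1+\nu}{1-\nu}}$, after which you substitute the two choices of $\eta$, and for $\nu=1$ use $\delta=0$. The only cosmetic difference is that for $\nu=1$ you take $L=M$ and invoke \eqref{eq:eta=cL} with $\eta=cM$, whereas the paper takes $\eta=L=cM$ directly; both yield the same bound.
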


\begin{proof}
If $\nu\in[0,1)$, by choosing $\eta = L$ in~\eqref{eq:rate_eta} and the definition of $\delta_\nu(L)$ in~\eqref{eq:delta_nu}, we have 
\begin{align}
 p(\bary_k)-p^*  \le \frac{ 125L D^2  }{k+3}+ 12M^{\frac{2}{1-\nu}}L^{-\frac{1+\nu}{1-\nu}}, \qquad \forall\, k\ge 1. \label{eq:rate_eta}
\end{align}
Since this holds for any $L>0$, we have~\eqref{eq:holder_rate1} and~\eqref{eq:holder_rate2}. If $\nu=1$, then $\delta=0$, and by choosing $\eta=L=cM$ for some $c\ge 1$, we have~\eqref{eq:holder_rate_nu=1}.   
\end{proof}

\begin{remark} \label{rmk:cor_holder}
Let us make some remarks about Corollary~\ref{cor:holder_rate}. 
First, note that from~\eqref{eq:holder_rate2} and~\eqref{eq:holder_rate_nu=1}, 
to find  an $\varepsilon$-optimal solution of~\eqref{eq:P}, 
the LMO complexity of Algorithm~\ref{algo:CG} is 
\begin{equation}
O\Big(M^{\frac{2}{1+\nu}}D^2\varepsilon^{-\frac{2}{1+\nu}}\Big)\qquad \mbox{for all $\nu\in[0,1]$}, \label{eq:optimal_holder_comp}
\end{equation}
which matches the lower bound in~\eqref{eq:lb_holder} (for dimension $n$ sufficiently large). 
 This establishes the optimality of both Algorithm~\ref{algo:CG} and the lower bound in~\eqref{eq:lb_holder}. 
Second, note that for $\nu\in[0,1)$, to obtain the LMO complexity with only the optimal dependence on $\varepsilon$ (but not $M$ and $D$), we could simply choose $\eta = \Theta(K^{{(1-\nu)}/{2}})$ as in~\eqref{eq:holder_rate1}. 
This choice, while resulting in a complexity with sub-optimal dependence on $M$ and $D$,   has the apparent advantage that no knowledge of $M$ and $D$ is required. Third, note that the choices of $\eta$ in Corollary~\ref{cor:holder_rate} require knowing (some of) the problem parameters, including $M$, $D$ and $\nu$, which may be hard to estimate in some situations. 
Providing (partial) remedies for this issue will become the topic of the next section. 
\end{remark}

\begin{remark}\label{rmk:AN}
At the end of Section~\ref{sec:method}, it was mentioned that although being structurally similar, the analysis techniques of Algorithm~\ref{algo:CG}, including choices of algorithmic parameters and output, are vastly different from those in~\cite[Algorithm~1]{Asgari_22}. Let us provide some in-depth discussions. 
At a fundamental level, the analyses in Section~\ref{sec:analysis} and~\cite{Asgari_22} target different function classes of $f$: the former targets $f$ equipped with a first-order $(\delta,L)$-inexact oracle on $\bbR^m$ with exact first-order information (cf.~\eqref{eq:inexact_quadratic}), and the latter targets $f$ being convex Lipschitz on $\bbR^m$ (i.e., $\nu=0$ in~\eqref{eq:holder}). In fact, the analysis in~\cite{Asgari_22} has a similar structure to that of the optimistic mirror descent method in~\cite{Rakhlin_13} (but with novel and important modifications), and critically exploits the global Lipschitz continuity of $f$. (Among others, the global Lipschitz continuity of $f$ is used to i) uniformly bound the dual variables and ii) bound the function-value difference $f(\barx_k) - f(\bary_k)$ via the constraint violation $\normt{\barx_k-\bary_k}$.) In contrast, by judiciously choosing the Lyapunov sequence $\{V_k\}_{k\ge 0}$, we establish the recursion in~\eqref{eq:master_recursion} with a crucial contraction factor in front of $V_{k+1}$, and this forms the cornerstone of our analysis. Consequently, we need to choose $\{\rho_k\}_{k\ge 0}$ as a 
{\em decreasing} sequence (cf.~\eqref{eq:rho_k}), and the output $\bary_K$ as a {\em weighted} average of $\{y_1,\ldots,y_K\}$. By contrast, in~\cite{Asgari_22}, $\{\rho_k\}_{k\ge 0}$ was chosen to be a constant sequence (i.e., $\rho_k\equiv \rho>0$ for all $k\ge 0$) and $\bary_K$ the uniform average of $\{y_1,\ldots,y_K\}$. Indeed, it can be shown that by choosing $\rho_k\equiv \rho$, the convergence rate of Algorithm~\ref{algo:CG} will 
be $\Omega(1/\sqrt{K})$, regardless of the choice of $\eta$ and $\rho.$ 
Finally, we mention that since our analysis also applies to the case where $\nu=0$, it can be regarded as an alternative analysis of~\cite[Algorithm~1]{Asgari_22}. 
\end{remark}


\section{Meta Parameter-Search Algorithms}

\begin{algorithm}[t]
\caption{Fixed-Budget Parameter Search}
\label{algo:meta_fixed}
\begin{algorithmic}[1]
    \State {\bf Input}: $y_{0}\in \dom g$,  
    budget $N$, absolute constant $b\ge 2$ 
    \State {\bf Set}: $K:=\floor{N/(b\log_2 N)}$ and $s:= \floor{(N/K-1)/2}$ \label{item:def_K_S}
    \For{\(j=-s,\ldots,s\)}
        \State Run Algorithm~\ref{algo:CG} for $K$ iterations, with starting point $y_0$ and parameters $\eta_k\equiv\eta^j = 2^j$ and $\rho_k^j$ as given in~\eqref{eq:rho_k}. Denote its output by $\bary_{K}^{j}$.   
\EndFor
   \State {\bf Output}: $\haty_N:= \bary_{K}^{j^*}$, where  $j^*:=\argmin_{-s\le j\le s}\; p(\bary_{K}^{j})$ \label{line:output}
     \end{algorithmic}
\end{algorithm}

As mentioned in Remark~\ref{rmk:cor_holder}, Algorithm~\ref{algo:CG} requires knowing the the problem parameters (such as $M$, $D$ and $\nu$) in order to achieve the optimal LMO complexity in~\eqref{eq:optimal_holder_comp}. Since these parameters may not always be available, 
we propose two meta parameter-search (MPS) algorithms that correspond to fixed and unbounded number of LMO calls, respectively. (In the sequel, we shall refer to the number of LMO calls as ``budget''.) 

The first MPS algorithm is shown in Algorithm~\ref{algo:meta_fixed}. Given a budget $N$, the idea is to run $2s+1$ copies of  Algorithm~\ref{algo:CG} with the same number of iterations $K$  but different values of $\eta$, such that the total number of LMO calls  $(2s+1)K\le N$. 
Specifically, the values of $\eta$ range from $2^{-s}$ to $2^{s}$. 
Since the value of $s$ grows with $N$ (cf.~Line~\ref{item:def_K_S}), as $N$ becomes sufficiently large, one of the copies of Algorithm~\ref{algo:meta_fixed}, say copy $i$, must operate at the ``right'' value of $\eta$ (within a factor of 2). 
Its output, namely $\bary_{K}^{i}$,  in turn provides computational guarantees for the ``best'' output $\haty_N$ (cf.~Line~\ref{line:output}). 
One should note that such a grid-search idea has appeared in the optimization literature multiple times, although in different contexts --- see e.g.,~\cite{Juditsky_14,Roulet_20,Renegar_22}.

We now provide the analysis of Algorithm~\ref{algo:meta_fixed}. For convenience, define 
\begin{equation}
r:=MD^{\nu-1}>0. \label{eq:def_r}
\end{equation}

\begin{theorem} \label{thm:fixed}
Let $f$ be $(M,\nu)$-H\"older smooth on $\bbR^m$.   
In Algorithm~\ref{algo:meta_fixed}, if 
\begin{equation}
N\ge N^*:=\max\left\{4b\log_2(4b),\;(8r^2)^{\frac{1}{b-1+\nu}},\;(8/r^2)^{\frac1b}\right\}\ge  24, \label{eq:N}
\end{equation}
then for all $\nu\in[0,1]$, we have 
\begin{equation}
p(\hat y_N)-p^* \le  262    M D^{\nu+1}\left(\frac{2b\log_2 N}{N}\right)^{\frac{1+\nu}{2}}. \label{eq:haty_N}
\end{equation}
\end{theorem}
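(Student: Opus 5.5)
The plan is to reduce the statement to the per-copy guarantee~\eqref{eq:rate_eta} and pick the right grid point. Since $\haty_N$ minimizes $p$ over the $2s+1$ outputs, it suffices to exhibit one index $j\in\{-s,\ldots,s\}$ with
\[
p(\bary_K^j)-p^*\le 262\,MD^{1+\nu}\Big(\tfrac{2b\log_2 N}{N}\Big)^{\frac{1+\nu}{2}}.
\]
The target value of $\eta$ is the one from~\eqref{eq:holder_rate2} with $c=1$, namely $\eta^\star:=K^{\frac{1-\nu}{2}}r$, where $r=MD^{\nu-1}$ as in~\eqref{eq:def_r}. For $\nu=1$ this is just $\eta^\star=M$.

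First, fix $j$ with $\eta^\star\le 2^j<2\eta^\star$, i.e.\ $j=\ceil{\log_2\eta^\star}$. I would apply~\eqref{eq:rate_eta} with $\eta=2^j$ and $L=2^j$. This is legitimate for any $L>0$ when $\nu<1$. For $\nu=1$ it needs $2^j\ge M=\eta^\star$, which holds by construction. The result is
\[
p(\bary_K^j)-p^*\le \frac{125\cdot 2\eta^\star D^2}{K}+12M^{\frac{2}{1-\nu}}(\eta^\star)^{-\frac{1+\nu}{1-\nu}}.
\]
Substituting $\eta^\star$ turns both terms into multiples of $MD^{1+\nu}K^{-\frac{1+\nu}{2}}$, giving at most $(250+12)MD^{1+\nu}K^{-\frac{1+\nu}{2}}$; this is where the constant $262$ comes from. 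For $\nu=1$ the $\delta$-term is absent.

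Next, I convert $K$ to $N$. Since $K=\floor{N/(b\log_2N)}\ge N/(2b\log_2 N)$ whenever $N/(b\log_2N)\ge1$, which the condition $N\ge 4b\log_2(4b)$ guarantees, we get $K^{-\frac{1+\nu}{2}}\le (2b\log_2N/N)^{\frac{1+\nu}{2}}$, and hence~\eqref{eq:haty_N}.

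The main obstacle is verifying that the chosen $j$ lies in $[-s,s]$; the remaining two terms of $N^*$ exist precisely to ensure this. We have $s=\floor{(N/K-1)/2}\ge(b\log_2N-3)/2$, so $2^{s}\gtrsim N^{b/2}/2^{3/2}$. It then suffices to show two things:
\begin{enumerate}
\item $\log_2\eta^\star+1\le s$, i.e.\ roughly $K^{1-\nu}r^2\lesssim N^{b}$. Using $K\le N$, this reduces to $8r^2\le N^{b-1+\nu}$, which is the second term in~\eqref{eq:N}.
\item $\log_2\eta^\star\ge -s$, i.e.\ $r^{-2}K^{-(1-\nu)}\lesssim N^{b}$. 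Using $K\ge1$, this reduces to $8/r^2\le N^b$, the third term.
\end{enumerate}
I expect the bookkeeping of floors here (the factor $8$ and the off-by-one from $\ceil{\cdot}$) to be the delicate part. If the crude bound $s\ge(b\log_2 N-3)/2$ is too lossy, I would instead use $N/K\ge b\log_2N$ directly and absorb the remaining constants into $N\ge 24$.
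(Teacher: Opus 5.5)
Your proposal is essentially the paper's proof. It uses the same target $\eta^\star=rK^{\frac{1-\nu}{2}}$, the same bound $s\ge(b\log_2N-3)/2$ combined with the second and third terms of $N^*$ to place $\log_2\eta^\star$ in $[-s,s]$, the same factor-$2$ grid rounding that gives $250+12=262$ from the per-copy rate, and the same conversion $K\ge N/(2b\log_2N)$.

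One step needs correcting: drop the off-by-one requirement $\log_2\eta^\star+1\le s$. Via your own bounds it would need $32r^2\le N^{b-1+\nu}$, not $8r^2$, so your stated reduction is off by that factor. It is also unnecessary: since $s$ is an integer, $\ceil{\log_2\eta^\star}\le s$ already follows from $\log_2\eta^\star\le s$, which is exactly what $8r^2\le N^{b-1+\nu}$ yields.
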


\begin{proof}
First, note that since $b>1$, we have $N\ge 4b\log_2(4b)> 8$. Since $\log_2 t\le t$ for all $t>0$,  
we know that
\begin{align}
&\log_2 N  = \log_2 (4b) + \log_2 \big({N}/({4b})\big) \le  {N}/({4b}) + {N}/({4b}) = {N}/({2b}) \label{eq:log_2_N}\\
\Longrightarrow\;\; & {N}/({b\log_2 N})\ge 2 \quad \Longrightarrow\quad K \label{eq:N/blog2N}
\ge 2. 
\end{align}
Define $\eta^* := rK^{\frac{1-\nu}{2}}>0$, so that $\log_2 \eta^*= \log_2 r + \frac{1-\nu}{2} \log_2 K$. We show that under~\eqref{eq:N}, we have
\begin{equation}
-s\le \log_2\eta^*\le s.  \label{eq:cover_s}
\end{equation}
First, since $N>8$ and $b>1$, we have $K\le N/(b\log_2 N)<N/3<N$. 
Indeed, we have 
\begin{align}
&s \ge \frac{N/K-1}{2} -1 \ge \frac{b\log_2 N - 3}{2}  = \frac{1-\nu}{2} \log_2 N + \frac{b-1+\nu}{2} \log_2 N -\frac{3}{2}\\
&\qquad\ge \frac{1-\nu}{2} \log_2 N + \frac{1}{2} (3+2\log_2 r) -\frac{3}{2}\ge \frac{1-\nu}{2} \log_2 K + \log_2 r = \log_2 \eta^*. 
\end{align}
Also, since $N\ge (8/r^2)^{\frac1b}$, we have $b\log_2 N \ge  3-\log_2 r$. Since $K\ge 2$, we have 
\begin{align}
s \ge \frac{b\log_2 N - 3}{2} \ge -\log_2 r = \frac{1-\nu}{2} \log_2 K - \log_2 \eta^*\ge - \log_2 \eta^*. 
\end{align}
Now, by~\eqref{eq:cover_s}, there exists $-s< j \le s$ such that $j-1\le \log_2\eta^*\le j$, which amounts to 
\begin{equation}
\eta^j/2 = 2^{j-1} \le \eta^*\le 2^j =\eta^j\quad \Longleftrightarrow\quad \eta^j = c  \eta^* \quad\mbox{for some } c\in[1,2]. 
\end{equation}
Now, invoking Corollary~\ref{cor:holder_rate}, we have that for all $\nu\in[0,1]$, 
\begin{align}
p(\haty_N)-p^*  &\le  262    M D^{\nu+1}K^{-\frac{1+\nu}{2}}. \label{eq:rate_K} 
\end{align}
From~\eqref{eq:N/blog2N}, we know that ${N}/({2b\log_2 N})\ge 1$, and  hence 
\begin{align}
K\ge {N}/({b\log_2 N}) - 1\ge {N}/({b\log_2 N}) - {N}/({2b\log_2 N}) = {N}/({2b\log_2 N}). \label{eq:lb_K}
\end{align}
Combine~\eqref{eq:rate_K} and~\eqref{eq:lb_K}, we complete the proof. 
\end{proof}

\begin{algorithm}[t]
\caption{Unbounded-Budget Parameter Search}
\label{algo:meta_unlimited}
\begin{algorithmic}
    \State {\bf Input}: $y_{0}\in \dom g$, absolute constant   $b\ge 2$ 
    \State {\bf Set}: $N_0 := \ceil{4b\log_2(4b)}$ 
    \For{\(t=0,1,\ldots\)}
	\State Run Algorithm~\ref{algo:meta_fixed} with starting point $y_0$, budget $N_t:= 2^tN_0$ and $b>1$, 
	and denote its output by $\haty_{t}$. Let $C_t$ the accumulated number of LMO calls after stage $t$. 
\EndFor
 \State {\bf Output}: For any $N\ge 0$, compute $\bart_N:= \max\{t\ge 0: C_t\le N\}$, and output  \vspace{-1ex}
\begin{equation}\vspace{-1ex}
y^{\rm best}_N:= \haty_{t_N^*}, \quad\where \quad t_N^*:={\argmin}_{0\le t\le \bart_N}\;\; p(\haty_{t})
\end{equation} 
\end{algorithmic}
\end{algorithm}

\begin{remark}
Note that the choice of $r$ in~\eqref{eq:def_r} precisely corresponds the choice of $\eta$ in~\eqref{eq:holder_rate2} via the relation $\eta = rK^{{(1-\nu)}/{2}}$. This choice of  $r$ corresponds 
to the ``typical regime of interest'' where both $M$ and $D$ are ``large'', namely $M=\omega(1)$ and $D=\omega(1)$. In this case, from Theorem~\ref{thm:fixed}, we know that 
to find an $\varepsilon$-optimal solution of~\eqref{eq:P}, the LMO complexity of Algorithm~\ref{algo:meta_fixed} is 
\begin{equation}
\begin{split}
&O\Big(\max\Big\{M^{\frac{2}{1+\nu}}D^2\varepsilon^{-\frac{2}{1+\nu}}\log_2\big(MD/\varepsilon\big), \, M^{\frac{2}{b-1+\nu}}D^{-\frac{2(1-\nu)}{b-1+\nu}},\, M^{-\frac{2}{b}}D^{\frac{2(1-\nu)}{b}}\Big\}\Big)\\
= \;&O\Big(M^{\frac{2}{1+\nu}}D^2\varepsilon^{-\frac{2}{1+\nu}}\log_2\big(MD/\varepsilon\big)\Big),  \qquad\qquad \mbox{for all $\nu\in[0,1]$}, 
\end{split}\label{eq:LMO_comp_meta}
\end{equation}
where the equality follows from $b\ge 2$. This reveals a limitation of the MPS approach: we need to pay an additional 
log-factor in the LMO complexity, as compared to the optimal one in~\eqref{eq:optimal_holder_comp}.  
On the other hand, note that if $D=O(1)$, then we can simply omit the dependence of $r$ on $D$ (i.e., let $r:= M$), and the LMO complexity of Algorithm~\ref{algo:meta_fixed} becomes
\begin{equation}
O\Big(M^{\frac{2}{1+\nu}}\varepsilon^{-\frac{2}{1+\nu}}\log_2\big(M/\varepsilon\big)\Big),  \qquad\qquad \mbox{for all $\nu\in[0,1]$}. 
\end{equation}
Of course, the same also applies to the case where $M = O(1)$ or both $M,D=O(1)$. 
\end{remark}

Clearly, with fixed budget $N$, one can only achieve a limited accuracy. 
Sometimes the budget $N$ is not bounded a priori, and in this case arbitrarily high accuracy could in principle be achieved. 
To that end, we present the second MPS algorithm, which is built on Algorithm~\ref{algo:meta_fixed} and shown in Algorithm~\ref{algo:meta_unlimited}. 
The idea behind this algorithm is fairly simple: we run Algorithm~\ref{algo:meta_fixed}  with some initial budget estimate $N_0$, and at each subsequent stage $t$, we run Algorithm~\ref{algo:meta_fixed} with budget $N_t:= 2^tN_0$. For sufficiently large $t$, $N_t$ will exceed the ``minimum requirement'' $N^*$ in~\eqref{eq:N}, and we start to have computational guarantees as given in~\eqref{eq:haty_N}.  The analysis of Algorithm~\ref{algo:meta_unlimited} is shown below. 

\begin{theorem}
\label{thm:unlimited}
Let $N^*$ be given in Theorem~\ref{thm:unlimited}. 
In Algorithm~\ref{algo:meta_unlimited}, if $N\ge 4N^*$,  the for all $\nu\in[0,1]$, we have 
\begin{equation}
    p(y^{\rm best}_N)-p^*    \le 262 M D^{\nu+1}\left(\frac{8b\log_2 N }{N}\right)^{\frac{1+\nu}{2}}.
    \label{eq:rate_unlimited}
\end{equation}
\end{theorem}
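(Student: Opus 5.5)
The plan is to reduce Theorem~\ref{thm:unlimited} to Theorem~\ref{thm:fixed}, applied to the last stage of Algorithm~\ref{algo:meta_unlimited} that is completed within budget $N$. Throughout, $N^*$ refers to~\eqref{eq:N}.

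First I will bound the accumulated budget. Stage $t$ uses at most $N_t=2^tN_0$ LMO calls, because Algorithm~\ref{algo:meta_fixed} uses $(2s+1)K\le N_t$ calls. Hence
\begin{equation*}
C_t\le \sum_{i=0}^t 2^iN_0< 2^{t+1}N_0=2N_t .
\end{equation*}
Since $t_N^*$ minimizes $p(\haty_t)$ over $0\le t\le \bart_N$, we get $p(y^{\rm best}_N)\le p(\haty_{\bart_N})$. So it suffices to show two things: that $N_{\bart_N}\ge N^*$, and that $N_{\bart_N}\ge N/4$.

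Next I will lower bound $N_{\bart_N}$. By maximality of $\bart_N$, we have $C_{\bart_N+1}>N$. Using the bound above,
\begin{equation*}
N< C_{\bart_N+1}< 2N_{\bart_N+1}=4N_{\bart_N}, \qquad\text{so}\qquad N_{\bart_N}>N/4\ge N^* .
\end{equation*}
Before this step I need $\bart_N$ to be well defined. This requires $C_0\le N$, which holds because $C_0\le N_0=\ceil{4b\log_2(4b)}$ and $N\ge 4N^*\ge 16b\log_2(4b)\ge N_0$.

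Now I apply Theorem~\ref{thm:fixed} to stage $\bart_N$, whose budget $N_{\bart_N}$ is at least $N^*$:
\begin{equation*}
p(\haty_{\bart_N})-p^*\le 262MD^{\nu+1}\left(\frac{2b\log_2 N_{\bart_N}}{N_{\bart_N}}\right)^{\frac{1+\nu}{2}} .
\end{equation*}
The map $x\mapsto \log_2 x/x$ is decreasing for $x\ge e$, and $N/4\ge N^*\ge 24$. Replacing $N_{\bart_N}$ by $N/4$ therefore gives
\begin{equation*}
\frac{2b\log_2 N_{\bart_N}}{N_{\bart_N}}\le \frac{8b\log_2(N/4)}{N}\le\frac{8b\log_2 N}{N},
\end{equation*}
and raising to the power $\frac{1+\nu}{2}$ yields~\eqref{eq:rate_unlimited}.

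The main obstacle is the bookkeeping in the second paragraph. It needs the exact geometric accounting, $C_{t+1}<4N_t$, to get the factor $4$, plus the monotonicity of $\log_2 x/x$ on the relevant range. The other detail to check is that Theorem~\ref{thm:fixed} is applied with the per-stage budget $N_{\bart_N}$, not with the global budget $N$.
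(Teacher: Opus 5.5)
Your proof is correct and follows essentially the same route as the paper. Both arguments bound $C_t<2N_t$, use maximality of $\bart_N$ to get $N_{\bart_N}>N/4\ge N^*$, apply Theorem~\ref{thm:fixed} to stage $\bart_N$, and finish with the monotonicity of $\log_2 x/x$. The paper additionally introduces the first index $\tau$ with $N_\tau\ge N^*$ and checks $C_\tau<4N^*\le N$; you get $N_{\bart_N}\ge N^*$ directly from $N_{\bart_N}>N/4$, and you also verify that $\bart_N$ is well defined, which the paper leaves implicit.
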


\begin{proof}
For convenience, write $t^*:= t_N^*$ and $\bart:=\bart_N$. Let $\tau:= \min\{t\ge 0: N_t\ge N^*\}$. 
If $\tau = 0$, then $N^*=N_0 = N_\tau$; otherwise, we have $N_{\tau}/2 = N_{\tau-1}< N^*\le N_t$, which amounts to $N^*\le N_\tau< 2N^*$. Either way, we have $N_\tau< 2N^*$. 
According to Algorithm~\ref{algo:meta_fixed}, at each stage $t$, we run $2s_t + 1$ copies of Algorithm~\ref{algo:CG}, with each copy run for $K_t$ iterations, where 
$K_t:=\floor{N_t/(b\log_2 N_t)}$ and $s_t:= \floor{(N_t/K_t-1)/2}$.
Thus for all $t\ge 0$, we have 
\begin{align}
C_t = \textstyle\sum_{i=0}^t (2s_t + 1) K_t\le \sum_{i=0}^t N_t = \sum_{i=0}^t 2^t N_0
\le 2^{t+1} N_0 = 2N_t, \label{eq:ub_Ct} 
\end{align}
and hence $C_\tau < 4N_*$. Also, by the definition of $\bart$ and~\eqref{eq:ub_Ct}, we have 
\begin{equation}
C_{\bart}\le N < C_{\bart+1}\le 2N_{\bart+1} = 4N_{\bart}. \label{eq:ub_cbart}
\end{equation}
Now, if $N\ge 4N^*$, then $N>C_\tau$, and we can invoke Theorem~\ref{thm:fixed} to obtain 
\begin{align}
  p(y^{\rm best}_N)-p^*    \le p(\haty_{\bart})-p^* \le  262    M D^{\nu+1}\left(\frac{2b\log_2 N_{\bart}}{N_{\bart}}\right)^{\frac{1+\nu}{2}}< M D^{\nu+1}\left(\frac{8b(\log_2 N - 2)}{N}\right)^{\frac{1+\nu}{2}},\nn
\end{align}
where the last inequality follows from~\eqref{eq:ub_cbart}, $N/4\ge N^*>8$, and the fact that  $x\mapsto \log_2 x/x$ is strictly decreasing on $[e,+\infty)$. 
\end{proof}

From Theorem~\ref{thm:unlimited}, it is clear that the LMO complexity of Algorithm~\ref{algo:meta_unlimited} is the same as that of  Algorithm~\ref{algo:meta_fixed}, which is shown in~\eqref{eq:LMO_comp_meta}. 
Before concluding this section, it is worth mentioning that the 
two MPS algorithms (i.e., Algorithms~\ref{algo:meta_fixed} and~\ref{algo:meta_unlimited}) are not intrinsically tied to Algorithm~\ref{algo:CG}, and can be applied to other algorithms with unknown parameters as well. 


\bibliographystyle{abbrv}
\bibliography{math_opt}

\end{document}